\documentclass[10pt]{amsart}

\usepackage[T1]{fontenc}
\usepackage[utf8]{inputenc}
\usepackage[a4paper,margin=1.15in]{geometry}
\usepackage{amsmath,amssymb,amsthm,mathtools}
\usepackage{enumitem}
\usepackage{microtype}
\usepackage{hyperref}
\usepackage[nameinlink,noabbrev]{cleveref}

\hypersetup{
  colorlinks=true,
  linkcolor=blue,
  citecolor=blue,
  urlcolor=blue
}

\newtheorem{theorem}[equation]{Theorem}
\newtheorem*{thm*}{Theorem}

\newtheorem{corollary}[equation]{Corollary}
\newtheorem{lemma}[equation]{Lemma}
\newtheorem{proposition}[equation]{Proposition}

\theoremstyle{definition}

\newtheorem{definition}[equation]{Definition}
\newtheorem{defn-lem}[equation]{Definition-Lemma}
\newtheorem{remark}[equation]{Remark}
\newtheorem*{rem*}{Remark}

\numberwithin{equation}{section}

\newcommand{\PA}{\mathrm{PA}}
\newcommand{\ZF}{\mathrm{ZF}}
\newcommand{\EA}{\mathrm{EA}}

\newcommand{\Prov}{\mathrm{Prov}}
\newcommand{\Proof}{\mathrm{Proof}}

\newcommand{\Con}{\mathrm{Con}}
\newcommand{\ConS}{\mathrm{Con}^S}
\newcommand{\RFN}{\mathrm{RFN}}

\newcommand{\ul}[1]{\ulcorner #1\urcorner}
\newcommand{\num}[1]{\overline{#1}}
\newcommand{\N}{\mathbb{N}}
\newcommand{\GLP}{\mathrm{GLP}}

\newcommand{\Tr}{\mathrm{Tr}}

\title[Uniformity of Consistency: Ein M\"archen]{Uniformity of Consistency in Arithmetic and G\"odel's Second Incompleteness Theorem: Ein M\"archen}
\author{Harald Grobner}
\address{Harald Grobner, Faculty of Mathematics, University of Vienna, Oskar-Morgenstern-Platz 1, A-1090 Vienna, Austria}
\email{harald.grobner@univie.ac.at}

\keywords{G\"odel's second incompleteness theorem, selector-proofs, consistency schema, provability, reflection principles}
\subjclass[2010]{03F40; 03F30, 03F07, 03F45; 03E35}

\date{\today}

\begin{document}

\begin{abstract}
In much discussed work Artemov has recently argued that, for $\PA$, the consistency schema admits a form of uniform verification via selector-proofs, despite the unprovability of the corresponding uniform consistency sentence $\Con(\PA)$. In this note, we show that this phenomenon extends to all sufficiently strong, uniformly reflexive arithmetizable theories, including $\ZF$ and many of their extensions: For such theories $T$, there exists a primitive recursive selector which, given a derivation code $d$, extracts a finite fragment $T_d\subseteq T$ containing the non-logical axioms occurring in $d$, uses a reflexivity proof of $\Con(T_d)$, and produces a $T$-proof that $d$ is not a derivation of $0=1$. As a dictum, one obtains a $T$-verification of the consistency of $T$ in a uniform way, despite the fact that it cannot be internalized as the single universal consistency sentence prohibited by G\"odel's Second Incompleteness Theorem. We further analyze this latter discrepancy and locate selector-proofs within the broader framework of provability and reflection.


\end{abstract}

\maketitle

\section*{Introduction}
\noindent G\"odel's Second Incompleteness Theorem (2.UVS\footnote{The unconventional abbreviation stands for {\it Zweiter (=2.) Unvollst\"andigkeitssatz}.}), cf.\ \cite{Godel1931}, is commonly taken to establish a definitive limitation on formal theories: No sufficiently strong, recursively axiomatized, consistent theory \(T\), based on a system of axioms $\mathcal L$, can prove its own consistency, provided that consistency is expressed in the standard arithmetical form
\[
  \Con(T) \equiv \forall x\, \neg \Proof_T(x,\ul{0=1}).
\]
This formulation has played a central role in the interpretation of Hilbert's program and in subsequent debates about the epistemological significance of consistency proofs, cf.\  \cite{Resnik1974,Auerbach1978,Auerbach1989,Steiner1991}.  The standard conclusion, drawn from G\"odel's 2.UVS, is that a theory capable of formalizing a sufficiently rich fragment of arithmetic cannot, by its own resources, certify the absence of a derivation of contradiction from its own axioms.\\\\
There is, however, a complementary perspective on this limitation, arising from the analysis of {\it serial properties} and their formalization, leading to a form of uniform provability which is not captured by a single universal sentence: For each standard natural number \(n\), let
\[
  \Con_n(T) \equiv \neg \Proof_T(\num n,\ul{0=1}),
\]
and consider the associated {\it consistency schema}
\[
  \ConS(T) := \{\Con_n(T) : n\in\N\}.
\]
It was shown in \cite{Artemov2019,Artemov2025JLC} that, for \(\mathcal L=\PA\), this schema is provable in the sense of so-called {\it selector-proofs}: That is, there exists (i) a ``genuine mathematical'' proof that no derivation of $T$ ends with $0=1$ and (ii) a G\"odelian formalization of this proof inside the theory. More precisely, for $\PA$, part (i) is supplied by partial truth predicates: Given a derivation $d$, one chooses a suitable partial truth predicate ${\rm Tr}_n$ and proves, using the Tarski conditions, that all formulas in $d$ are $\Tr_n$-true; since $0=1$ is not $\Tr_n$-true, $d$ cannot be a derivation of $0=1$. Together with a G\"odelian formalization of this argument, which is the contents of (ii), this finally yields the aforementioned ``selector-proof'' of the corresponding consistency schema. Thus, while the universal sentence \(\Con(\PA)\) is unprovable, the corresponding series $ \ConS(\PA)$ still admits a strong form of uniform witnessing in $\PA$.\\\\
As the first main goal of this short note\footnote{A note, which admittedly bears a title that can rightly be considered stolen from R. P. Langlands, or at least contains a clear allusion to a work written by him and highly valued by the author of these lines, see \cite{LanglandsCorvallis}.}, we show that this phenomenon is not specific to \(\PA\), but extends in a natural way to all sufficiently strong arithmetizable theories.\\\\
This is done a a two-step analysis: First we record a well-known general scheme-level construction, implicit in the work of many people, for instance Pudl\'ak,
Feferman, and Verbrugge--Visser \cite{Pudlak1986,Fef62,Ver-Vis94}: For any recursively axiomatized theory $T$, whose syntax is representable in a weak arithmetic base, there exists a primitive recursive procedure producing proofs of all instances $\Con_n(T)$. See Prop.\ \ref{prop:selector-con} and Rem.\ \ref{rem:Pudlak} for all details. Second, if \(T\) is uniformly reflexive, in the sense that there is a primitive recursive procedure which, given a finite fragment \(F\subseteq T\), produces a \(T\)-proof of the G\"odelian consistency statement $\Con(F)$, then a derivation code $d$ can be handled as follows: Extract the finite fragment $T_d\subseteq T$ actually used in $d$, use the $T$-proof of $\Con(T_d)$, and infer that $d$ is not a proof of $0=1$. This gives the additional consistency-checking step missing from the scheme-level construction: The selector no longer merely produces proofs of the schema instances, but does so by certifying the finite fragment actually used by the given derivation. This extends the proof-producing mechanism of Artemov from \(\PA\) to the above uniformly reflexive theories, including \(\PA\) and, under the usual coding of syntax, \(\ZF\) and many of their extensions. We refer to our Thm.\ \ref{thm:-selector}, Cor.\ \ref{cor:pa-extensions-} and Cor.\ \ref{cor:zf-selector}.\\\\
The second main goal of this note is to locate selector proofs within
the broader framework of provability and reflection, and thereby to
draw conclusions about their possible significance for Hilbert's
program, more specifically, for its attempt to clarify in what sense consistency
can, and in what sense it cannot, be proved within the theory itself. Indeed, in that spirit, the above results suggest a refinement of the usual (rather apodictic) interpretation of G\"odel's 2.UVS: The relevant boundary is not simply between provability and unprovability of consistency statements within a given theory, but between {\it two distinct forms of uniformity}: On the one hand, there is a form of proof-producing uniformity, embodied in a single effective procedure, which gives proofs of all instances $\Con_n(T)$ while verifying a given derivation code \(d\) by extracting the finite fragment \(T_d\subseteq T\) actually used in \(d\) and appealing to a \(T\)-proof of \(\Con(T_d)\) (i.e., it provides proofs in a way, which is in this sense uniform). On the other hand, there is the ordinary uniform consistency assertion, expressed by the single universal arithmetical sentence $\Con(T)$ and prohibited by G\"odel's 2.UVS.\\\\
Selector-proofs show, by circumnavigating non-standard natural numbers, which could give rise to proof-codes, which produces ``G\"odel monsters'', that the former can exist even when the latter is provably unattainable. In this sense, they isolate a genuine
intermediate level of proof: stronger than mere and instantwise
verification of the consistency schema, yet strictly weaker than the uniform consistency statement. In
particular, the limitation identified by G\"odel's 2.UVS {\it is not a
limitation on the existence of uniform computational witnesses}, but on
the passage from such witnesses to the ordinary universal consistency
sentence. From this perspective, the significance of the schema-based
constructions underlying selector-proofs clearly lies not in challenging G\"odel's 2.UVS, but in
revealing a form of uniform computational content that does not
collapse into $\Con(T)$. We leave it to the reader to compare this thought to an approach carried out in \cite{Artemov2025}, where Artemov argues that consistency as a schema is provably in $\PA$ equivalent to the standard combinatorial definition of consistency, whereas consistency as a formula $\Con(\PA)$ is not. 

\bigskip
\small
\noindent  {\it Acknowledgments:}
This short note is an outgrowth of the author's lecture-course "Philosophie und Mathematik", held at the University of Vienna in spring 2026 together with Georg Schiemer. We are very grateful to Sergei Artemov for several valuable comments on a preliminary version of this paper and a fruitful discussion about and a presentation of his work and its context. We are also grateful to Emil Je\v r\'abek for hinting us to Pudl\'ak's paper \cite{Pudlak1986}.
\normalsize

\section{Standing assumptions and minor recaps}\label{sec:arith}
\subsection{First-order theories}\label{sect:1}
Unless otherwise stated, in this note, \(T\) denotes a recursively axiomatized theory, formulated in first-order predicate logic (out of a system $\mathcal L$ of axioms, say), extending a weak arithmetic base sufficient for elementary syntactic coding. By the latter we shall (at least) mean elementary arithmetic $\EA$ in the sense of
$I\Delta_0 + \mathrm{Exp}$, i.e.\ Robinson arithmetic together with
induction restricted to $\Delta_0$-formulas and the totality of
exponentiation. However, for stronger metamathematical arguments, in particular those involving
derivability conditions and reflection principles, we work over
$I\Sigma_1$, i.e., arithmetic with induction restricted to
$\Sigma_1$-formulas and we shall indicate this in the statement of our results.

\subsection{Basics on provability and consistency}\label{sect:12}
Given a system of axioms $\mathcal L$ with theory $T$ as in Sect.\ \ref{sect:1}, we choose and fix a standard G\"odel numbering and a standard proof predicate
\[
  \Proof_T(p,x),
\]
where \(\Proof_T(p,x)\) expresses that \(p\) codes a \(T\)-derivation of the formula with G\"odel number \(x\). The very details in its formalization as an expression of $\mathcal L$ are assumed to be fixed once and for all throughout the paper. 

\begin{definition}
The associated {\it provability predicate} is
\[
  \Prov_T(x) \equiv \exists p\,\Proof_T(p,x).
\]
The standard {\it consistency sentence} is
\[
  \Con(T) \equiv \neg\Prov_T(\ul{0=1}),
\]
i.e., $\Con(T) \equiv \forall p\,\neg \Proof_T(p,\ul{0=1}).$
\end{definition}

\begin{lemma}\label{lem:proof-predicate}
For a standard arithmetization of syntax, \(\Proof_T(p,x)\) is elementary, indeed primitive recursive, and hence representable in \(\EA\).  In common codings it may be taken to be \(\Delta_0\) after the usual coding of bounded computations.
\end{lemma}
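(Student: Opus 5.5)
The plan is to establish the three claims of Lemma \ref{lem:proof-predicate} in order: primitive recursiveness, representability in \(\EA\), and the \(\Delta_0\) form. Each follows from the standard coding machinery; the only real dependence on \(T\) is through its recursive axiomatization.

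\emph{Primitive recursiveness.} Fix a standard sequence coding, for instance prime-power or Cantor-pairing based, under which the length function \(\mathrm{lh}(p)\) and the component function \((p)_i\) are elementary. Then \(\Proof_T(p,x)\) unfolds as follows. First, \(p\) codes a finite sequence. Second, its last entry is \(x\). Third, for every \(i<\mathrm{lh}(p)\), the entry \((p)_i\) is either a logical axiom, or an axiom of \(T\), or follows from earlier entries \((p)_j,(p)_k\) with \(j,k<i\) by a rule such as modus ponens or generalization. The syntactic predicates involved are all elementary by course-of-values definitions with explicit bounds: being a term, being a formula, substitutability, instances of logical axiom schemata, and the rule relations. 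Each is checked by a bounded search over subterms, and subterms have smaller codes. Membership in the axiom set of \(T\) is primitive recursive. Recursive axiomatization literally gives only a recursive set, so here I would invoke Craig's trick: replace each axiom \(\varphi\) by \(\varphi\wedge\cdots\wedge\varphi\), repeated as often as the length of its enumeration computation. This yields an equivalent axiomatization that is primitive recursive, indeed elementary. Putting these together with bounded quantifiers \(\forall i<\mathrm{lh}(p)\) and \(\exists j,k<i\), \(\Proof_T\) is elementary, and a fortiori primitive recursive.

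\emph{Representability.} Elementary functions and relations are exactly those \(\Delta_0\)-definable from exponentiation. By the standard fact that \(\EA\) proves the totality of \(2^x\) and its recursion equations, every elementary relation \(R\) has a \(\Delta_0(\exp)\) definition \(\varphi_R\). This \(\EA\) decides correctly on numerals: if \(R(\bar n)\) holds then \(\EA\vdash\varphi_R(\bar n)\), and otherwise \(\EA\vdash\neg\varphi_R(\bar n)\). This is \(\Sigma_1\)-completeness for bounded formulas, together with the bounded formula's truth being decided by true equations between closed terms.

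\emph{The \(\Delta_0\) form.} Exponentiation has a \(\Delta_0\) graph \(y=2^x\), witnessed by a bounded computation sequence whose code is polynomially bounded in \(y\). Moreover, every bound appearing in the definition of \(\Proof_T(p,x)\) is bounded by \(p\) itself, since all entries and subcomputations are coded inside \(p\). Hence one can choose the coding so that the computation witnesses live below \(p\), and then all quantifiers are bounded by \(p\) or \(x\). This yields a \(\Delta_0\) formula.

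The main obstacle is this last step together with the Craig reduction. It requires arranging that the verification of axiomhood for \(T\) runs within a time bound polynomial in \(p\), so that its witnesses can be absorbed into \(p\). That is exactly why the statement hedges with ``in common codings.'' I would handle it by building the padded Craig axioms or the computation traces into the derivation code.
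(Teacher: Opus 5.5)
Your proposal is correct and has the same skeleton as the paper's proof: a line-by-line check that each entry of \(p\) is a logical axiom, an axiom of \(T\), or follows from earlier entries by a rule, then representability in \(\EA\). The differences lie in what you make explicit where the paper only cites H\'ajek--Pudl\'ak.

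The most notable difference concerns the axiom predicate. The paper says the check is ``primitive recursive relative to the chosen recursive axiom predicate''. For a merely recursive \(\mathcal L\) this yields only a recursive \(\Proof_T\), since a one-line derivation \(\langle x\rangle\) already makes \(\Proof_T\) decide axiomhood of \(x\). So the paper's claim tacitly presupposes an elementary axiom predicate (its ``usual elementary presentations''). Your Craig padding removes that presupposition. The price is that \(\Proof_T\) then refers to a deductively equivalent but different axiomatization. This is harmless, but you should state it, since later the paper extracts ``the non-logical axioms occurring in \(d\)'' from the fixed presentation.

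You also get numeralwise decidability in \(\EA\) directly from the \(\Delta_0(\exp)\) description of elementary relations, rather than from the general representability theorem. And you actually sketch the \(\Delta_0\) claim, which the paper only asserts.

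In that sketch, sharpen the criterion. What is needed is not verification in time polynomial in \(p\). It is that every witness, in particular the axiom-checking trace, has code bounded by a polynomial in \(p\), i.e.\ time times space linear in the bit-length of \(p\). Even a computation with \(|p|\) steps on \(|p|\) cells has a trace of about \(|p|^2\) bits, so its code is not polynomially bounded in \(p\). Consequently, padding \(\varphi\) only as many times as its running time \(t\) is not enough. It gives about \(t\cdot|\varphi|\) symbols against a trace of about \(t(|\varphi|+t)\) bits. That suffices for elementarity, but not for absorbing the witnesses below \(p\). Pad quadratically in \(t\), or, as you suggest, carry the traces inside the derivation code.
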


\begin{proof}
This is the standard representability theorem for primitive recursive relations.  (Verification that a finite sequence is a formal derivation is obtained by checking, for each line, whether it is an axiom or follows from earlier lines by one of finitely many rules.  Since the axiom set $\mathcal L$ of \(T\) is recursive, this verification is primitive recursive relative to the chosen recursive axiom predicate, and in the usual elementary presentations it is representable already in \(\EA\).  See, for example, \cite[Chp.~I]{HP}.)
\end{proof}

\noindent When $T$ extends a sufficiently strong fragment of arithmetic, such as $I\Sigma_1$ (and, for standard codings, already $\EA$ in the usual presentations), the standard Hilbert--Bernays--L\"ob derivability conditions, underlying the usual proof of L\"ob's theorem and G\"odel's 2.UVS, hold for the standard provability predicate $\Prov_T$, assuming a standard coding of syntax, see, e.g., \cite{HilbertBernays1939,Lob1955}, \cite[Ch.~I--II]{HP}, and \cite{Smorynski1985}. For weaker base theories, such as Robinson arithmetic $Q$, these conditions
may fail, in particular condition (D3):

\begin{enumerate}[label=(D\arabic*)] 
  \item If $T\vdash \varphi$, then $T\vdash \Prov_T(\ul{\varphi})$.
  \item $T\vdash \Prov_T(\ul{\varphi\to\psi})
        \to(\Prov_T(\ul{\varphi})\to\Prov_T(\ul{\psi}))$.
  \item $T\vdash \Prov_T(\ul{\varphi})
        \to \Prov_T(\ul{\Prov_T(\ul{\varphi})})$.
\end{enumerate}

\begin{remark}[Jeroslow's trick]
The third derivability condition (D3) is not strictly necessary for the derivation of G\"odel's 2.UVS. Following Jeroslow, cf.\ \cite{Jeroslow1971}, one may replace the standard consistency statement by a slightly modified formula for which the second incompleteness theorem can be established using only (D1) and (D2). Thus, the essential content of the theorem does not depend on the full strength of the Hilbert--Bernays--L\"ob conditions.
\end{remark}

\begin{definition}\label{def:schema}
For each standard \(n\in\N\), define the \(n\)-th {\it consistency instance}
\[
  \Con_n(T) \equiv \neg \Proof_T(\num n,\ul{0=1}).
\]
The associated {\it consistency schema} is the external family
\[
  \ConS(T) := \{\Con_n(T): n\in\N\}.
\]
\end{definition}

\noindent  Here, \(\ConS(T)\) is not a single formula of the object language, but a meta-level family of formulas indexed by the standard natural numbers. It is, however, naturally represented by a primitive recursive arithmetical term that enumerates G\"odel numbers of formulas in  \(\ConS(T)\) and hence may be viewed as a finitary object.

\begin{proposition}\label{prop:pointwise-strong}
For each standard $n\in\N$, the statement
$\Proof_T(\num n,\ul{0=1})$ is decidable by a finite computation
formalizable in $\EA$. In particular, if $T$ is consistent and
extends $\EA$, then
$$T \vdash \Con_n(T).$$
\end{proposition}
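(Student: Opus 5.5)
The plan is to combine Lemma \ref{lem:proof-predicate} with $\Sigma_1$-completeness (in fact $\Delta_0$-completeness) of $\EA$, together with soundness of the evaluation for standard inputs.

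\textbf{Step 1: decidability by a finite computation.} By Lemma \ref{lem:proof-predicate}, $\Proof_T(p,x)$ is primitive recursive; in the usual coding it is even $\Delta_0$. For a fixed standard $n$, the relation $\Proof_T(\num n,\ul{0=1})$ is then a closed $\Delta_0$ sentence. Its truth value is computed by a terminating procedure: decode $n$ as a finite sequence of formula codes, check for each entry whether it is an axiom of $\mathcal L$ (using the recursive axiom predicate) or follows from earlier entries by a rule, and check whether the last entry is $\ul{0=1}$. All quantifiers involved are bounded by $n$, or by an elementary function of $n$, so the computation is finite.

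\textbf{Step 2: formalization in $\EA$.} I would invoke the standard fact that $\EA$ (indeed already $Q$) is $\Delta_0$-complete: every true closed $\Delta_0$ sentence is provable, and so is the negation of every false one. This is proved by induction on formula complexity. Atomic cases reduce to correct evaluation of closed terms, using that $\EA$ proves the defining equations of $+$, $\cdot$ and $\exp$. Bounded quantifiers $\forall y\le\num m$ unfold into finite conjunctions, because $Q$ proves $y\le\num m\leftrightarrow \bigvee_{k\le m} y=\num k$. Hence $\EA$ decides $\Proof_T(\num n,\ul{0=1})$, and so does every $T$ extending $\EA$.

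If one prefers to regard $\Proof_T$ only as a primitive recursive (not $\Delta_0$) relation, I would instead use its $\Sigma_1$ representation: there is a $\Sigma_1$ formula whose graph correctly represents the characteristic function, and $\Sigma_1$-completeness gives provability of the correct value $0$ or $1$. This is the step I expect to be the only real subtlety. One must ensure that the chosen formalization of $\Proof_T$ is \emph{numeralwise} correct in both directions, so that negated instances are provable and not merely true. This holds because the characteristic function is provably total and functional on numerals.

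\textbf{Step 3: conclusion.} Suppose $T$ is consistent. Then no actual derivation ends in $0=1$, so in particular $n$ does not code one, and the closed sentence $\Proof_T(\num n,\ul{0=1})$ is false in $\N$. By Step 2, $\EA\vdash\neg\Proof_T(\num n,\ul{0=1})$. Since $T$ extends $\EA$, it follows that $T\vdash\Con_n(T)$.
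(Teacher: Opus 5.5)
Your proposal is correct and follows essentially the same route as the paper. Lemma~\ref{lem:proof-predicate} makes each closed instance $\Proof_T(\num n,\ul{0=1})$ decidable in $\EA$, consistency of $T$ rules out the positive case, and the negated instance then passes from $\EA$ to $T$. You additionally spell out what the paper leaves implicit: the appeal to $\Delta_0$- (or $\Sigma_1$-) completeness, and the requirement that negated instances be provable and not merely true, which indeed depends on formalizing $\Proof_T$ via a standard, provably decidable axiom predicate rather than an arbitrary $\Sigma_1$ definition of the axiom set.
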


\begin{proof}
By Lem.\ \ref{lem:proof-predicate}, the predicate $\Proof_T(p,x)$ is primitive recursive and hence decidable on each fixed standard input. Thus, for each standard $n$, $\EA$ proves either $\Proof_T(\num n,\ul{0=1})$ or its negation. If $T$ is consistent, then no standard $n$ codes a $T$-proof of $0=1$, so the second case holds. Hence $\EA \vdash \Con_n(T)$, and therefore $T \vdash \Con_n(T)$ since $T$ extends $\EA$.
\end{proof}

\noindent Quite obviously, the assertion ``$\forall n\, T\vdash \Con_n(T)$'' is not by itself a meaningful consistency notion as it holds independently of whether \(T\) is consistent. Indeed, we get

\begin{theorem}\label{thm:model-separation}
Let \(T\) be as in Sect.\ \ref{sect:1} and assume in addition that it is a consistent extension of $I\Sigma_1$ with the standard provability predicate. Then, the theory
\[
  T + \neg\Con(T) + \ConS(T)
\]
is consistent. In other words, there is a model \(M\models T\) such that $M\models \Con_n(T)$, for every standard $n\in\N$, but $M\models \neg\Con(T)$, or, equivalently, $M\models \exists x\,\Proof_T(x,\ul{0=1}).$ Any such witness is necessarily non-standard.
\end{theorem}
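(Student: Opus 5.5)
The plan is to use G\"odel's 2.UVS in its standard form and then a compactness argument. First, since $T$ is consistent, recursively axiomatized and extends $I\Sigma_1$, the derivability conditions (D1)--(D3) hold for $\Prov_T$, as recalled above. Hence G\"odel's 2.UVS gives $T\nvdash \Con(T)$. Equivalently, the theory $T_0 := T+\neg\Con(T)$ is consistent.

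Next I show that every finite subset of $T+\neg\Con(T)+\ConS(T)$ is consistent. Such a subset is contained in $T_0+\{\Con_n(T): n\le N\}$ for some standard $N$. By Prop.\ \ref{prop:pointwise-strong}, since $T$ is consistent and extends $\EA$, we have $T\vdash\Con_n(T)$ for each $n\le N$. These sentences are therefore already theorems of $T\subseteq T_0$, so adding them does not change $T_0$ up to deductive closure. Thus the finite subset is consistent because $T_0$ is.

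By compactness, or more directly by the observation that $T+\neg\Con(T)+\ConS(T)$ has the same theorems as $T_0$, the whole theory is consistent. G\"odel's completeness theorem then yields a model $M\models T$ with $M\models\Con_n(T)$ for all standard $n$ and $M\models\neg\Con(T)$. Unfolding the definition of $\Con(T)$, the latter is equivalent to $M\models\exists x\,\Proof_T(x,\ul{0=1})$.

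Finally, suppose a witness $a\in M$ were standard, say $a=\num n^M$. Then $M\models\Proof_T(\num n,\ul{0=1})$, contradicting $M\models\Con_n(T)$. So every witness is non-standard.

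The only substantive input is the 2.UVS step, and this is where the hypotheses that $T$ extends $I\Sigma_1$ and uses the standard provability predicate are needed. Everything else is routine, since the schema is already provable in $T$.
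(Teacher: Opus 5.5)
Your proof is correct and matches the paper's argument. G\"odel's 2.UVS gives the consistency of $T+\neg\Con(T)$, Prop.~\ref{prop:pointwise-strong} makes each $\Con_n(T)$ a theorem of $T$, compactness gives the model, and a standard witness $\num n^M$ would contradict $\Con_n(T)$; your side remark that compactness is strictly unnecessary, since the schema already lies in the deductive closure of $T$, is a harmless streamlining of the same idea.
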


\begin{proof}
Since \(T\nvdash\Con(T)\) by G\"odel's 2.UVS, the theory \(T+\neg\Con(T)\) is consistent.  Moreover, each standard instance \(\Con_n(T)\) is provable in \(T\) by Prop.\ \ref{prop:pointwise-strong}.  Therefore adjoining any finite subset of \(\{\Con_n(T):n\in\N\}\) to \(T+\neg\Con(T)\) preserves consistency.  By compactness, the full theory is consistent and (hence) has a model \(M\).  In \(M\), all standard instances hold by construction, while \(\neg\Con(T)\) also holds. Therefore, if \(a\in M\) witnesses \(\Proof_T(a,\ul{0=1})\), \(a\) cannot be the interpretation of any standard numeral \(\num n\).
\end{proof}

\noindent This theorem gives the semantic reason why the schema does not collapse to the universal sentence: The schema says that each standard numeral fails to be a contradiction proof.  The universal sentence says that every element of every model fails to be such a proof code. Non-standard models separate these requirements: Whereas the consistency schema controls standard proof codes only, the universal sentence also controls non-standard objects in non-standard models.\\\\
The relevant issue is therefore not mere pointwise provability of the instances, but the
proof-producing mechanism by which these instances are obtained. A purely
scheme-level selector (cf.\ Def.\ \ref{def:selector} below) may obtain the desired instance by explosion in
the exceptional case, in which the input really codes a contradiction
proof. By contrast, a proper {\it selector-proof} (cf.\ Def.\ \ref{def:selector-proof-consistency}), to be considered below, will verify an
alleged derivation code \(d\) by extracting the finite fragment
\(T_d\subseteq T\) actually used in \(d\) and appealing to a
\(T\)-proof of \(\Con(T_d)\).

\section{Selector based proofs of consistency}
\subsection{Weak syntactic bases and scheme-level proof production}\label{sec:selectors}

The previous section treated \(\ConS(T)\) as an external family.
There is a first, purely formal way of making this family uniform: One may ask for a single primitive recursive operation which, on input \(n\), returns a \(T\)-proof of the instance \(\Con_n(T)\). This is an important proof-producing construction, but it is not yet what we shall call a {\it selector-proof of consistency}. The reason is that such a construction may produce the desired instance proof by purely formal means, which is problematic from the (philosophical) point of view of trying to give a proper verification of consistency, if the input in fact codes a contradiction proof. In order to keep this distinction also technically explicit, we first record the following weaker scheme-level notion:

\begin{definition}[Scheme-level selector]\label{def:selector}
Let \(F(x)\) be a primitive recursive series of formulas. We say that \(T\) admits a \emph{scheme-level selector} for the series \(F\), if there exists a primitive recursive function \(s_T\) such that
\[
 T\vdash \forall x\, \Proof_T(s(x),\ul{F(\dot x)}).
\]
\end{definition}

This is a deliberately schematic definition. As just mentioned above, the terminology ``scheme-level'' is meant to indicate that this notion only concerns the production of formal proofs of the individual instances, but it does not by itself provide a verification that the input is not a contradiction proof. In the following arguments, nothing will depend on the particular coding.

\begin{proposition}[Scheme-level construction]\label{prop:selector-con}
Let \(T\) be a recursively axiomatized first-order theory extending \(\EA\), cf.\ Sect.\ \ref{sect:1}, with the standard proof predicate \(\Proof_T\). Then the consistency schema \(\ConS(T)\) admits a scheme-level selector. More precisely, there exists a primitive recursive function \(s_T\) such that
\[
 T\vdash \forall x\, \Proof_T\bigl(s_T(x),\ul{\neg \Proof_T(\dot x,\ul{0=1})}\bigr).
\]
\end{proposition}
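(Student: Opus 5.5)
The plan is to formalize, uniformly in \(x\), the pointwise argument of Prop.\ \ref{prop:pointwise-strong}, with a case distinction that is decided \emph{inside} the arithmetic. The key tool is the formalized version of \(\Sigma_1\)-completeness, in its proof-producing form for primitive recursive (indeed \(\Delta_0\)) relations. For a \(\Delta_0\)-formula \(\theta(x)\) there is a primitive recursive function \(c_\theta\) such that
\[
\EA\vdash \forall x\,\bigl(\theta(x)\to \Proof_T(c_\theta(x),\ul{\theta(\dot x)})\bigr)
\]
and
\[
\EA\vdash \forall x\,\bigl(\neg\theta(x)\to \Proof_T(c_{\neg\theta}(x),\ul{\neg\theta(\dot x)})\bigr).
\]
This is the standard construction: the proof \(c_\theta(x)\) is built by recursion on the bounded evaluation of \(\theta\) at the numeral \(\dot x\), and its length is elementary in \(x\). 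I would take \(\theta(x):\equiv \Proof_T(x,\ul{0=1})\), which is \(\Delta_0\) by Lem.\ \ref{lem:proof-predicate}.

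Next I would define \(s_T\) by cases.
\begin{itemize}
\item If \(\neg\Proof_T(x,\ul{0=1})\) holds, set \(s_T(x):=c_{\neg\theta}(x)\). This directly proves \(\neg\Proof_T(\dot x,\ul{0=1})\).
\item If \(\Proof_T(x,\ul{0=1})\) holds, then \(x\) itself is a \(T\)-derivation of \(0=1\). Set \(s_T(x)\) to be \(x\) followed by the fixed ex-falso derivation of \(\neg\Proof_T(\dot x,\ul{0=1})\) from \(0=1\). Concatenation and substitution of \(\dot x\) are primitive recursive, so this is a legitimate case.
\end{itemize}
The case distinction is itself primitive recursive, since the predicate is. Hence \(s_T\) is primitive recursive and \(\EA\)-provably total.

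The verification inside \(T\) (indeed inside \(\EA\)) then proceeds as follows. Argue for arbitrary \(x\) and split by the law of excluded middle on the \(\Delta_0\) statement \(\Proof_T(x,\ul{0=1})\). In the first case, the formalized completeness statement above gives \(\Proof_T(s_T(x),\ul{\neg\Proof_T(\dot x,\ul{0=1})})\). In the second case, we must check internally that appending a valid derivation to a valid derivation, with matching end formula \(0=1\), yields a valid derivation. This is an elementary fact about the proof predicate, provable by \(\Delta_0\)-induction on line indices. Combining the two cases yields the universal statement of Prop.\ \ref{prop:selector-con}.

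The main obstacle is the first bullet: establishing formalized \(\Delta_0\)-completeness with an explicit primitive recursive proof-constructor, uniformly in a nonstandard \(x\) and within \(\EA\). I would prove it by induction on the structure of \(\theta\) (a metatheoretic induction, since \(\theta\) is fixed). Each step comes with an internal \(\Delta_0\)-induction over the bounded quantifiers, using exponentiation to bound the size of the produced proofs. This is where the \(\mathrm{Exp}\) axiom is genuinely needed. For the details I would cite the standard treatment in \cite[Chp.~I]{HP}.
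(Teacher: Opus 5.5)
Your proposal is correct and takes essentially the same approach as the paper: $s_T$ is defined by the same two cases. In Case~1 the negative computation is turned into a formal derivation; in Case~2 an ex-falso continuation is appended to the given contradiction proof, and correctness is then formalized in $\EA$. What you add is detail the paper compresses into the single remark that correctness ``is formalizable already in $\EA$'': you identify Case~1 as proof-producing formalized $\Delta_0$-completeness, and you make the internal case split and the concatenation lemma explicit.
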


\begin{proof}
Since \(\Proof_T(x,\ul{0=1})\) is primitive recursive, there exists a primitive recursive procedure deciding, on input \(x\), whether \(x\) codes a \(T\)-proof of \(0=1\). We define a primitive recursive function \(s_T(x)\) by cases.\\\\
\emph{Case 1:} If \(x\) does not code a \(T\)-proof of \(0=1\), then the negative computation verifying \(\neg \Proof_T(x,\ul{0=1})\) can be converted into a formal \(T\)-derivation of
\[
\neg \Proof_T(\num x,\ul{0=1}).
\]
Let \(s_T(x)\) be the code of this derivation.\\\\
\emph{Case 2:} If \(x\) does code a \(T\)-proof of \(0=1\), then from this proof one obtains a \(T\)-derivation of any formula, in particular of
\[
\neg \Proof_T(\num x,\ul{0=1}).
\]
More explicitly, there is a fixed primitive recursive procedure which, given \(x\), produces a \(T\)-derivation of
\[
0=1 \to \neg \Proof_T(\num x,\ul{0=1}).
\]
Let \(s_T(x)\) be the code of the derivation obtained by appending this derivation to the proof coded by \(x\).\\\\
The construction of \(s_T\) is primitive recursive, since proof verification, substitution of numerals, and concatenation of proof codes are primitive recursive operations. Moreover, the correctness of this construction is formalizable already in \(\EA\), and hence in \(T\). Therefore,
\[
 T\vdash \forall x\, \Proof_T\bigl(s_T(x),\ul{\neg \Proof_T(\dot x,\ul{0=1})}\bigr),
\]
as claimed.
\end{proof}

\begin{remark}[Pudl\'ak's finitistic consistency statements]\label{rem:Pudlak}
Prop.\ \ref{prop:selector-con}  gives a self-contained scheme-level selector. Its proof-theoretic core, however, is closely related to, and in the cases of interest translatable from Pudl\'ak's work on finitistic consistency statements \cite{Pudlak1986}, Thm.\ 5.5. Pudl\'ak considers, for an axiomatization \(A\), formulas of the form \(\Con_A(m)\), expressing that there is no \(A\)-proof of contradiction of length at most \(m\). Thus \(\Con_A(m)\) is a bounded-length consistency statement, whereas our instance
\[
 \Con_n(T) \equiv \neg \Proof_T(\num n,\ul{0=1})
\]
is a code-wise statement about one particular alleged proof code.\\\\
For sequential theories and for suitable presentations of their axiomatizations, Pudl\'ak obtains polynomial upper bounds on the lengths of proofs of such finitistic consistency statements. In particular, rightly interpreted, his results yield polynomially bounded proofs of the corresponding bounded consistency statements of \(\PA\) and \(\ZF\), see \cite{Pudlak1986}, p.\ 192. It is therefore implicit that, for theories covered by Pudl\'ak's hypotheses, his bounded-length result implies the existence of primitive recursive proof-producing functions for the individual code-wise instances: given a proof code \(n\), choose a length bound \(m\) large enough to cover the proof coded by \(n\), use the proof of \(\Con_A(m)\), and then infer that \(n\) itself is not a proof of contradiction. In this sense, Pudl\'ak's theorem contains the proof-complexity substance of the scheme-level selector phenomenon considered by Artemov, \cite{Artemov2019,Artemov2025JLC}, and indeed in a quantitative form. The point of Prop.\ \ref{prop:selector-con}  is therefore primarily conceptual, namely to isolate the scheme-level proof-producing phenomenon in an arithmetized form.
\end{remark}

\subsection{Selector-proofs via uniform reflexivity}\label{sec:selector-proof}

\noindent Prop.\ \ref{prop:selector-con} is to be viewed just as a first step, which deliberately only treats a (rather light) scheme-level so far: In Case 2, if the input \(x\) codes a \(T\)-proof of contradiction, the desired target formula is obtained by explosion, in which case the construction produces proofs of the relevant schema instances, but it does not yet provide a consistency check for the alleged derivation code itself. This is exactly the distinction emphasized by Artemov between producing proofs of the consistency scheme and proving the corresponding consistency property, cf.\ \cite[Sec.~5.2]{Artemov2025JLC}: The latter requires an additional argument (called ``contentual'' in the aforementioned reference) showing, for a given derivation, why it cannot end in \(0=1\).\\\\ 
We now isolate a major additional hypothesis, which in fact turns purely scheme-level proof production into a proper mathematical consistency check for each alleged derivation. The key point is that a concrete derivation uses only finitely many non-logical axioms. If the theory can uniformly prove the consistency of each such finite
fragment, then any putative derivation can be ruled out by appealing to the consistency of the finite fragment, which it actually uses.\\\\
Let \(T\) be a recursively axiomatized first-order theory extending \(\EA\), satisfying the standing assumptions of Sect.\ \ref{sect:1}, with the standard proof predicate \(\Proof_T\).  A finite fragment of axioms in $\mathcal L$ is coded by a finite sequence $e=( a_0,\ldots,a_{k-1})$ of G\"odel numbers and we write \(T_e\) for the corresponding finite subtheory of \(T\). The associated proof predicate \(\Proof_{T_e}(p,x)\) is the primitive recursive predicate obtained from \(\Proof_T\) by replacing the axiom predicate for \(T\) by membership in the finite list \(e\). We put
\[
        \Con(T_e)\equiv\;
       \forall p\,\neg \Proof_{T_e}(p,\ulcorner 0=1\urcorner)
\]
and define 


\begin{definition}[Uniform reflexivity]\label{def:uniform-reflexivity}
Let \(T\) be as in Sect.\ \ref{sect:1}. We say that \(T\) is \emph{uniformly reflexive}, if there exists a primitive recursive function \(u_T\) such that for every finite sequence of G\"odel numbers \(e\), coding a finite fragment of axioms in $\mathcal L$, \(u_T(e)\) is the code of a \(T\)-proof of $\Con(T_e)$. 
\end{definition}

\begin{remark}
Equivalently, finite fragments of $\mathcal L$ admit \(T\)-proofs of their G\"odelian consistency in a way which is uniform and primitive recursive in a code of the fragment.
 Uniform reflexivity is therefore a constructive strengthening of ordinary
reflexivity. While ordinary reflexivity asserts only that
\(T\vdash \Con(T_e)\) for each finite fragment \(T_e\subseteq T\), uniform reflexivity additionally requires that such proofs be obtained
primitively recursively from a code of the fragment.
\end{remark}

\begin{definition}[Selector-proofs of consistency]\label{def:selector-proof-consistency}
Let \(T\) be as in Sect.\ \ref{sect:1}. We say that \(T\) admits a \emph{selector-proof of consistency}, if there exists a primitive recursive function \(h_T\) such that, for every derivation code \(d\), \(h_T(d)\) is a \(T\)-proof of $\neg\Proof_T(\num d,\ul{0=1})$, obtained in the following, prescribed way: One extracts the finite fragment \(T_d\subseteq T\), actually used by the derivation coded by \(d\), uses a \(T\)-proof of \(\Con(T_d)\), and derives from this that \(d\) is not a derivation of \(0=1\).
\end{definition}

\begin{theorem}[Selector-proofs for uniformly reflexive theories]\label{thm:-selector}
Let \(T\) be a recursively axiomatized first-order theory extending \(\EA\), satisfying the standing assumptions of Sect.\ \ref{sect:1}, with the standard proof predicate \(\Proof_T\). Assume furthermore that \(T\) is uniformly reflexive. Then, \(T\) admits a selector-proof of consistency. 
\end{theorem}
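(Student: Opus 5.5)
The plan is to build \(h_T\) as the composition of three primitive recursive steps: an extraction step, a call to the uniform reflexivity function \(u_T\) of Def.\ \ref{def:uniform-reflexivity}, and a fixed derivation-gluing step. We do not need to treat the case where \(d\) is not a derivation code specially; Def.\ \ref{def:selector-proof-consistency} only concerns derivation codes, and any other input can be sent to the scheme-level proof \(s_T(d)\) of Prop.\ \ref{prop:selector-con}.

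\emph{Step 1 (extraction).} Define a primitive recursive function \(\mathrm{ax}(d)\). It scans the lines of the sequence coded by \(d\) and collects, in order, those lines that satisfy the (recursive) axiom predicate of \(\mathcal L\) but are not logical axioms. The output is a finite sequence \(e_d\) of G\"odel numbers, and we let \(T_d := T_{e_d}\). This is clearly primitive recursive, because the length of the list is bounded by \(d\). Moreover, \(\EA\) proves the syntactic fact
\[
\forall p\,\forall x\,\bigl(\Proof_T(p,x)\to \Proof_{T_{\mathrm{ax}(p)}}(p,x)\bigr),
\]
since every line of \(p\) justified as a \(T\)-axiom is, by construction, a member of the list \(\mathrm{ax}(p)\). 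Proving this inside \(\EA\) is a \(\Delta_0\)-induction on the line index. Instantiating it at the numeral \(\num d\), and using that \(\mathrm{ax}(\num d)=\num{e_d}\) is provable by a standard computation, we get, uniformly and primitive recursively in \(d\), a \(T\)-proof \(q(d)\) of
\[
\Proof_T(\num d,\ul{0=1})\to \Proof_{T_{e_d}}(\num d,\ul{0=1}).
\]

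\emph{Step 2 (reflexivity).} By Def.\ \ref{def:uniform-reflexivity}, \(u_T(e_d)\) is a \(T\)-proof of \(\Con(T_{e_d})\equiv\forall p\,\neg\Proof_{T_{e_d}}(p,\ul{0=1})\). Appending a single universal instantiation at \(\num d\) gives a proof of \(\neg\Proof_{T_{e_d}}(\num d,\ul{0=1})\), and this appending operation is primitive recursive.

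\emph{Step 3 (gluing).} Combine the two proofs by contraposition and modus ponens to obtain a \(T\)-proof \(h_T(d)\) of \(\neg\Proof_T(\num d,\ul{0=1})\). Concatenating proof codes and adding a fixed number of propositional steps is primitive recursive, so \(h_T=\mathrm{glue}(q(d),u_T(\mathrm{ax}(d)))\) is primitive recursive. It also has exactly the prescribed form: it extracts \(T_d\), uses a \(T\)-proof of \(\Con(T_d)\), and derives the instance from it. Only the meta-level fact that each \(h_T(d)\) is a \(T\)-proof is needed; no internal verification over all \(x\) is required.

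\emph{Main obstacle.} The step needing the most care is Step 1. There we must make sure the coding conventions match, namely that \(\Proof_{T_e}\) really is obtained from \(\Proof_T\) by replacing the axiom predicate with list membership, so that the implication is provable in \(\EA\) uniformly, and not merely true. We also need the parametrized proof \(q(d)\) to be produced primitive recursively. I would handle this by proving the universal statement once in \(\EA\) and then using a fixed primitive recursive numeral-substitution and instantiation procedure, together with the standard fact (as in Lem.\ \ref{lem:proof-predicate}) that \(\EA\) proves the correct values of primitive recursive terms at numerals.
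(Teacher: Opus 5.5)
Your proposal is correct and follows essentially the paper's route. You extract the fragment $T_d$ primitively recursively, use $u_T$ to obtain a $T$-proof of $\Con(T_d)$, and combine it with a uniformly produced $T$-proof of $\Proof_T(\num d,\ul{0=1})\to\Proof_{T_d}(\num d,\ul{0=1})$. Two differences are cosmetic: you make the uniformity of that implication explicit (one $\EA$-proof of the universal statement, then instantiation at numerals), and you route ill-formed codes through $s_T$, whereas the paper assigns them the empty fragment.
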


\begin{proof}
Given a derivation code \(d\), let \(T_d\) be the finite subtheory of \(T\) consisting of the non-logical \(T\)-axioms occurring in the derivation coded by \(d\). The operation \(d\mapsto T_d\) is primitive recursive: One scans the finite proof object coded by \(d\), identifies those lines which are non-logical \(T\)-axioms, and records their G\"odel numbers. Here we adopt the convention that \(T_d\) denotes the empty fragment, if \(d\) is not a well-formed derivation code.\\\\
By uniform reflexivity, there is a primitive recursive procedure producing a \(T\)-proof of
\[
 \Con(T_d)\equiv \forall p\,\neg\Proof_{T_d}(p,\ul{0=1}).
\]
On the other hand, elementary syntactic verification gives, uniformly in \(d\), a \(T\)-proof of
\[
 \Proof_T(\num d,\ul{0=1})
 \rightarrow
 \Proof_{T_d}(\num d,\ul{0=1}).
\]
Indeed, if \(d\) is a \(T\)-derivation of \(0=1\), then all non-logical axioms used in \(d\) belong by construction to \(T_d\). Hence the same finite derivation is already a \(T_d\)-derivation of \(0=1\). Combining the proof of \(\Con(T_d)\) with this syntactic implication yields a \(T\)-proof of $\neg\Proof_T(\num d,\ul{0=1}).$ As all operations involved -- extracting \(T_d\), producing the reflexivity proof of \(\Con(T_d)\), formalizing the implication from \(T\)-proofhood to \(T_d\)-proofhood, and concatenating the resulting proof codes -- are primitive recursive, we may choose these operations to define the desired selector \(h_T\).
\end{proof}

\noindent Unlike the scheme-level selector of Prop.\ \ref{prop:selector-con}, the construction of Thm.\ \ref{thm:-selector} supplies a verification that the given derivation is controlled by a finite fragment, whose consistency is in fact proved in \(T\), whence more than a proof-producing operation for the consistency scheme.\\\\
We now deal with substantial families of examples of theories, to which Thm.\ \ref{thm:-selector} applies.

\begin{proposition}[Uniform reflexivity of PA-extensions]\label{prop:pa-uniform-reflexive}
Let \(T\) be a first-order theory in the language of arithmetic, satisfying the standing assumptions Sect.\ \ref{sect:1}. Assume moreover that \(T\) extends \(\PA\). Then \(T\) is uniformly reflexive.
\end{proposition}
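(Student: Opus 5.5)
The plan is to prove uniform reflexivity of $\PA$ directly and then transfer it to $T$ by weakening. For $\PA$ itself I would follow the partial truth predicate argument recalled in the introduction. For each standard $n$, $\PA$ (indeed $I\Sigma_{n+1}$) defines a satisfaction predicate $\Tr_n$ for $\Sigma_n$-formulas and proves the Tarski conditions for it. A finite fragment $T_e$ consists of finitely many axioms of $\PA$: basic axioms of $Q$ and induction instances. Let $n(e)$ be a primitive recursive bound on the logical complexity of the formulas listed in $e$. Since a $T_e$-derivation of $0=1$ may contain formulas of arbitrarily high complexity through cuts, I would first pass to a cut-free (Tait-style) calculus, in which every formula of a proof of $0=1$ from $T_e$ is a subformula instance of an axiom in $e$. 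Then $\PA$ proves, by induction on the length of the cut-free derivation $p$, that every sequent in $p$ is $\Tr_{n(e)}$-true. This uses the Tarski clauses for the logical rules, and it uses that each axiom in $e$ is $\Tr_{n(e)}$-true. For an induction instance, this last point is proved by the very same instance of induction applied to $\Tr_{n(e)}$, which is legitimate inside $\PA$. Since $0=1$ is not true, this yields $\PA\vdash\forall p\,\neg\Proof^{\mathrm{cf}}_{T_e}(p,\ul{0=1})$.

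Equivalently, and more cleanly, I could cite the classical fact (Mostowski, Kreisel--L\'evy) that $\PA\vdash \RFN_{\Sigma_{n}}(I\Sigma_n)$, and hence $\PA\vdash\Con(I\Sigma_n)$, for each $n$. Then $\Con(T_e)$ follows from $\Con(I\Sigma_{n(e)})$, because $T_e\subseteq I\Sigma_{n(e)}$ provably in $\EA$ by a finite check.

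Uniformity is the real content. I must exhibit a primitive recursive $u(e)$, not just existence for each $e$. For this I would construct one proof template $\pi_n$ of $\Con(I\Sigma_n)$ whose code is a primitive recursive function of $n$. The formula $\Tr_n$ is built by $n$-fold iteration of a fixed syntactic operation, so $\ul{\Tr_n}$ is primitive recursive in $n$. The proofs of the Tarski conditions for $\Tr_n$ are obtained from those for $\Tr_{n-1}$ by a fixed schematic derivation, so their codes are again primitive recursive in $n$ by primitive recursion on $n$. The soundness induction is a single instance of the induction schema for a formula built from $\Tr_n$, so it is also primitive recursive in $n$. Next I would add a derivation of $\Con(I\Sigma_n)\to\Con(T_e)$, obtained by formalizing the inclusion $T_e\subseteq I\Sigma_n$ and monotonicity of $\Proof$ in the axiom set; this is again primitive recursive in $e$. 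Setting $u_{\PA}(e):=$ the concatenation of $\pi_{n(e)}$ with this derivation gives uniform reflexivity of $\PA$.

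For $T\supseteq\PA$ in the language of arithmetic, the issue is that $e$ may contain non-$\PA$ axioms of $T$. The fragment $T_e$ is then a finite set of arithmetic sentences $\psi_1,\dots,\psi_k$ together with $\PA$-axioms. I would treat these by relativization: let $\psi_e:=\bigwedge\psi_i$, so $\Con(T_e)$ follows from $\PA$-provably-in-$\EA$ $\Con(I\Sigma_n+\psi_e)$. The partial truth argument above, applied with $n\ge$ the complexity of $\psi_e$, proves in $\PA$ the implication $\Tr_n(\ul{\psi_e})\to\Con(I\Sigma_n+\psi_e)$, again uniformly in $e$. Since $T\vdash\psi_e$ via the axioms themselves, and $\PA\vdash\psi_e\leftrightarrow\Tr_n(\ul{\psi_e})$ by a Tarski biconditional whose proof is primitive recursive in $\psi_e$, $T$ proves $\Con(T_e)$. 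Every piece is a primitive recursive assembly, so $u_T$ exists.

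The main obstacle is the uniformity in $n$ of the truth-predicate proofs: the codes of the Tarski biconditionals and of the soundness proof must genuinely depend primitive recursively on $n$, and on the formula. I would handle this by defining all of them through a single primitive recursion on syntax, checking that each recursion step is a fixed finite schematic derivation. The passage to cut-free proofs could alternatively be avoided by bounding the complexity of the formulas in $p$, but a complexity bound depending on $p$ cannot be uniform in $e$, so I would instead use the cut-elimination or reflection formulation.
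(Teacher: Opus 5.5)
Your proposal is correct. It uses the same ingredients as the paper's sketch: partial truth predicates, truth of the axioms of the fragment, and a soundness induction over derivations. It differs, however, at the decisive step.

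The paper chooses $\Tr_{m(d,F)}$ with a level depending on the formulas occurring in the alleged derivation $d$. That choice is legitimate in Artemov's selector setting, where $d$ is a given standard code and the target is the single instance $\neg\Proof_T(\num d,\ul{0=1})$. It is not legitimate for $\Con(F)\equiv\forall p\,\neg\Proof_F(p,\ul{0=1})$, for three reasons:
\begin{itemize}
\item there the derivation is a bound and possibly nonstandard variable;
\item a Hilbert-style $F$-derivation may contain formulas of unbounded complexity;
\item a single $T$-proof can invoke only finitely many fixed predicates $\Tr_m$.
\end{itemize}
Read literally, the sketch therefore yields only the instances for standard $d$, which is the schema again.

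You instead make the level depend on $e$ alone. You pass to cut-free derivations, whose formulas are subformula instances of the end-sequent, so one predicate $\Tr_{n(e)}$ and one induction suffice. This, or equivalently citing $\PA\vdash\Con(I\Sigma_n)$ in the Mostowski/Kreisel--L\'evy form, is what actually delivers the universal sentence $\Con(T_e)$ that uniform reflexivity demands. You also substantiate the primitive recursive dependence on $e$, which the paper merely asserts. The price is formalized cut elimination.

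A few points to tighten:
\begin{itemize}
\item Since $\Con(T_e)$ refers to the standard predicate $\Proof_{T_e}$, you must state explicitly the step $\PA\vdash\Proof_{T_e}(p,\ul{0=1})\to\exists q\,\Proof^{\mathrm{cf}}_{T_e}(q,\ul{0=1})$. It holds because cut elimination needs only the totality of superexponentiation, which is provable already in $I\Sigma_1$.
\item Your closing sentence misstates the obstruction. A complexity bound depending on $p$ fails not for lack of uniformity in $e$, but because $p$ is universally quantified inside the $T$-proof. Your earlier remark about cuts states this correctly.
\item Your relativization paragraph already handles every $e$, its $\PA$-axioms included. Three ingredients give $\Con(T_e)$ directly: $T\vdash\psi_e$, a Tarski biconditional, and the single lemma $\PA\vdash\Tr_n(\ul{\psi})\to\Con(\psi)$ for $\psi$ of complexity at most $n$, with proof primitive recursive in $n$ and $\psi$. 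So the separate $I\Sigma_n$ and truth-of-induction argument can be dropped.
\end{itemize}
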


\begin{proof}[Sketch of a proof:]
This is the constructive content of the classical
reflexivity argument due to Mostowski, Pudl\'ak et al. Let \(F\subseteq T\) be a
finite fragment. We argue in \(T\) that no \(F\)-derivation proves
\(0=1\). Given an arbitrary alleged \(F\)-derivation \(d\), choose
effectively from \(d\) and from the finitely many axioms in \(F\) a
partial truth predicate \(\Tr_{m(d,F)}\) of sufficiently high
complexity to apply to all formulas occurring in \(d\). The theory
\(\PA\) proves the relevant Tarski biconditionals
\[
        A \leftrightarrow \Tr_{m(d,F)}(\ul A)
\]
for the formulas \(A\) in this finite syntactic range. Since the
non-logical axioms of \(F\) are axioms of \(T\), the theory \(T\) proves
that these axioms are \(\Tr_{m(d,F)}\)-true. It also verifies, already
over weak arithmetic, that logical axioms are
\(\Tr_{m(d,F)}\)-true and that the inference rules preserve
\(\Tr_{m(d,F)}\)-truth. Hence \(T\) proves that, if \(d\) is an
\(F\)-derivation, then every formula occurring in \(d\) is
\(\Tr_{m(d,F)}\)-true. But \(T\) proves that \(0=1\) is not
\(\Tr_{m(d,F)}\)-true. Therefore \(d\) is not an \(F\)-derivation of
\(0=1\).\\\\
All constructions used here are uniform in a code of \(F\). The
relevant syntactic bounds, the partial truth predicates, the required
Tarski biconditionals, and the formalized induction over derivations
are obtained effectively, indeed primitively recursively, from the code
of \(F\). Thus, there is a primitive recursive function which, given a
code of \(F\), produces a \(T\)-proof of \(\Con(F)\).
\end{proof}

\begin{corollary}[Selector-proof for PA-extensions]\label{cor:pa-extensions-}
Let \(T\) be a first-order theory in the language of arithmetic, satisfying the standing assumptions Sect.\ \ref{sect:1}, and assume that $T$ extends \(\PA\). Then $T$ admits a selector-proof of its own consistency in the sense of Def.~\ref{def:selector-proof-consistency}.
\end{corollary}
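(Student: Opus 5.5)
The plan is to obtain the corollary by chaining the two preceding results. Let \(T\) be as in the statement: a recursively axiomatized first-order theory in the language of arithmetic that satisfies the standing assumptions of Sect.~\ref{sect:1} and extends \(\PA\). First I check that \(T\) meets the hypotheses of Thm.~\ref{thm:-selector}. Since \(\EA = I\Delta_0+\mathrm{Exp}\) is a subtheory of \(\PA\), \(T\) extends \(\EA\). By the standing assumptions, \(T\) is recursively axiomatized and comes with the fixed standard proof predicate \(\Proof_T\). So the only hypothesis still to verify is uniform reflexivity in the sense of Def.~\ref{def:uniform-reflexivity}.

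This is exactly what Prop.~\ref{prop:pa-uniform-reflexive} provides. It gives a primitive recursive function \(u_T\) such that, for every code \(e\) of a finite fragment of the axioms of \(T\), \(u_T(e)\) codes a \(T\)-proof of \(\Con(T_e)\). Once we have this, Thm.~\ref{thm:-selector} applies directly. It yields a primitive recursive \(h_T\) that, on a derivation code \(d\), does the following:
\begin{enumerate}[label=(\roman*)]
\item it extracts the code \(e_d\) of the fragment \(T_d\) of non-logical axioms used in \(d\), taking the empty fragment if \(d\) is ill-formed;
\item it calls \(u_T(e_d)\) to get a \(T\)-proof of \(\Con(T_d)\);
\item it appends a uniformly generated \(T\)-proof of \(\Proof_T(\num d,\ul{0=1})\to\Proof_{T_d}(\num d,\ul{0=1})\);
\item it concludes \(\neg\Proof_T(\num d,\ul{0=1})\).
\end{enumerate}
This is precisely the prescribed shape required by Def.~\ref{def:selector-proof-consistency}. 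So \(h_T\) is a selector-proof of consistency for \(T\), which is the claim.

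The main obstacle sits inside Prop.~\ref{prop:pa-uniform-reflexive}, which is only sketched. The sketch has \(T\) prove, for each finite fragment \(F\), the statement \(\forall p\,\neg\Proof_F(p,\ul{0=1})\). Here \(p\) ranges over all, possibly non-standard, derivations, so the choice of the truth predicate \(\Tr_{m(d,F)}\) "from \(d\)" cannot be read literally. To make the corollary airtight I would fix the complexity bound from \(F\) alone, not from \(d\). Concretely, take \(m\) to be the maximal logical complexity of the axioms in \(F\), and argue in \(\PA\) through cut-elimination for \(\Sigma_m\)-formulas (or Herbrand/partial cut-elimination). Then any \(F\)-derivation of \(0=1\) can be transformed into one that involves only \(\Sigma_{m'}\)-formulas, with \(m'\) depending only on \(m\).

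With the bound fixed in advance, a single partial truth predicate \(\Tr_{m'}\) suffices. Its Tarski clauses are \(\PA\)-provable, the axioms of \(F\) are \(\Tr_{m'}\)-true (each is a theorem of \(T\) and \(\PA\) proves \(A\leftrightarrow\Tr_{m'}(\ul A)\)), and induction on the length of the derivation shows that \(0=1\) would have to be true, which is absurd. Every stage of this argument is generated primitive recursively from the code of \(F\), so \(u_T\) exists. With that secured, the corollary is just the composition described above.
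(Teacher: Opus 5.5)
Your proposal is correct and matches the paper's proof, which obtains the corollary by combining Prop.~\ref{prop:pa-uniform-reflexive} with Thm.~\ref{thm:-selector}. Your added remark is also sound: the paper's sketch chooses $\Tr_{m(d,F)}$ depending on $d$, which cannot be done inside $T$ for non-standard $d$, and fixing the complexity bound from $F$ alone via (partial) cut-elimination is the standard way to make that step rigorous.
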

\begin{proof}
Combine Prop.~\ref{prop:pa-uniform-reflexive} with Thm.~\ref{thm:-selector}.
\end{proof}

\subsection{An extending example: The case of \texorpdfstring{\(\ZF\)}{ZF}}\label{sec:zf}

As one should expect, the conclusion of Cor.\ \ref{cor:pa-extensions-} applies -- with the usual {\it ante praeparanda} -- to \(\ZF\) set theory: Fix a standard arithmetization of the syntax of first-order set theory and let \(\Proof_{\ZF}(p,x)\) express that \(p\) codes a \(\ZF\)-proof of the formula with G\"odel number \(x\). Since \(\ZF\) interprets weak arithmetic and formalizes primitive recursive syntax, the finite verification argument above applies verbatim.

\begin{definition}
For each standard \(n\), define
\[
 \Con_n(\ZF) \equiv \neg\Proof_{\ZF}(\num n,\ul{0=1}).
\]
Let
\[
 \Con(\ZF) \equiv \forall p\,\neg\Proof_{\ZF}(p,\ul{0=1}).
\]
\end{definition}

\begin{proposition}\label{prop:zf-instances}
For every standard \(n\), \(\ZF\vdash \Con_n(\ZF)\). Moreover, the
proof of each instance may be chosen to be the image of \(n\) under a
primitive recursive scheme-level selector of the kind described in
Prop.\ \ref{prop:selector-con}. If \(\ZF\) is consistent, then each of
these instances is true in the standard model \(\mathbb N\).
\end{proposition}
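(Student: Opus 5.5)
The statement has three parts, and I would treat them in order. The first part is $\ZF\vdash\Con_n(\ZF)$ for each standard $n$. I would transfer Prop.~\ref{prop:pointwise-strong} to the set-theoretic setting. The predicate $\Proof_{\ZF}(p,x)$ is primitive recursive by Lem.~\ref{lem:proof-predicate}, since the axiom set of $\ZF$ is recursive. Hence for fixed standard $n$ the sentence $\Proof_{\ZF}(\num n,\ul{0=1})$ is decided by a finite computation, which $\EA$ formalizes.

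Since $\ZF$ interprets $\EA$ (via $\omega$ and its arithmetic), the translation of this finite computation is a $\ZF$-proof of either $\Proof_{\ZF}(\num n,\ul{0=1})$ or its negation. The only care needed is that numerals $\num n$ and the $\Delta_0$ clauses are read through the interpretation; bounded formulas about specific numerals are absolute, so there is no difficulty.

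There is one point I must handle explicitly: the statement asserts $\ZF\vdash\Con_n(\ZF)$ \emph{without} assuming consistency. I would therefore split into two cases. If $n$ does not code a contradiction proof, the computation yields $\neg\Proof_{\ZF}(\num n,\ul{0=1})$ directly. If $n$ does code a contradiction proof, then $\ZF$ is inconsistent and proves everything, in particular $\Con_n(\ZF)$. This is exactly the case distinction of Prop.~\ref{prop:selector-con}.

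The second part is that the proofs come from a scheme-level selector. Here I would invoke Prop.~\ref{prop:selector-con} with $T=\ZF$. Its only hypothesis is that $T$ is a recursively axiomatized theory extending $\EA$, understood via the interpretation of arithmetic in $\ZF$. Instantiating the universally quantified conclusion at $\num n$ and using that $\Proof_{\ZF}(s_{\ZF}(\num n),\dots)$ is a true primitive recursive statement, I get that $s_{\ZF}(n)$ is a $\ZF$-proof of $\Con_n(\ZF)$. Alternatively, one reads off directly that the two-case construction above is primitive recursive in $n$: it consists of a decision procedure, a conversion of the computation trace into a formal derivation, and concatenation with a fixed explosion derivation.

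The third part is truth in $\mathbb N$ under consistency. If $\ZF$ is consistent, no natural number codes a $\ZF$-derivation of $0=1$. Since $\Proof_{\ZF}$ represents the proof relation correctly in the standard model, $\mathbb N\models\neg\Proof_{\ZF}(\num n,\ul{0=1})$.

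I expect the main obstacle to be conceptual bookkeeping rather than mathematics. The hypothesis ``$T$ extends $\EA$'' in Prop.~\ref{prop:selector-con} must be read as ``$T$ interprets $\EA$ with the interpretation fixed once and for all.'' The formalized correctness argument in that proposition also needs to be checked to survive relativization to $\omega$. I would address this by noting that every ingredient is a $\Delta_0$ or primitive recursive fact, and that $\ZF$ proves the relativized $\EA$ axioms.
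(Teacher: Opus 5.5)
Your proposal is correct and follows the paper's proof essentially step for step. You apply Prop.~\ref{prop:selector-con} through the standard interpretation of $\EA$ in $\ZF$, split into the negative-computation case (instance provable in the interpreted base) and the case where $n$ really codes a contradiction proof (instance by explosion), and note that under consistency the second case never occurs for standard $n$.

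One small tightening: the fact that $s_{\ZF}(n)$ really is a $\ZF$-proof of $\Con_n(\ZF)$ should be read off from the construction of $s_{\ZF}$. That is your ``alternative'' route, and it is what the paper does. It should not be inferred from $\ZF\vdash\Proof_{\ZF}(s_{\ZF}(\num n),\dots)$, because that inference tacitly uses the consistency of $\ZF$, which the first two claims do not assume.
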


\begin{proof}
The scheme-level selector assertion follows from
Prop.\ \ref{prop:selector-con}, taking \(\mathcal L=\ZF\) and using the
standard interpretation of elementary arithmetic in \(\ZF\). Explicitly,
for a fixed standard \(n\), verification whether \(n\) is or is not a
\(\ZF\)-proof of \(0=1\) is a finite primitive recursive computation.
If the computation is negative, the corresponding instance
\(\Con_n(\ZF)\) is provable already in the weak syntactic base
interpreted in \(\ZF\). If the computation is positive, then \(n\)
codes an actual \(\ZF\)-proof of contradiction and \(\ZF\) proves the
same instance by explosion. Finally, if \(\ZF\) is consistent, the
positive case never occurs for standard \(n\), and hence all instances
\(\Con_n(\ZF)\) are true in \(\mathbb N\).
\end{proof}

\begin{theorem}\label{thm:zf-godel}
If \(\ZF\) is consistent, then \(\ZF\nvdash \Con(\ZF).\)
\end{theorem}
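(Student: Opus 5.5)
The plan is to derive Thm.~\ref{thm:zf-godel} as the instance $\mathcal L=\ZF$ of G\"odel's 2.UVS. The arithmetical facts will be transported into set theory through the standard interpretation of arithmetic in $\ZF$, with $\omega$ as the natural numbers. First I fix, inside $\ZF$, the interpretation of $I\Sigma_1$ (indeed of $\PA$) on $\omega$, with $0$, successor, $+$, $\cdot$ defined set-theoretically. Under this translation $\ZF$ proves every axiom of $I\Sigma_1$, since full induction on $\omega$ is a theorem of $\ZF$. Using Lem.~\ref{lem:proof-predicate}, the primitive recursive predicate $\Proof_{\ZF}(p,x)$ is represented by a $\Delta_0$ (or $\Sigma_1$) arithmetical formula. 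Its translation gives the predicate $\Prov_{\ZF}$ and the sentence $\Con(\ZF)$ as read in $\ZF$, in accordance with the Definition preceding Prop.~\ref{prop:zf-instances}.

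Second, I verify the Hilbert--Bernays--L\"ob conditions (D1)--(D3) for $\Prov_{\ZF}$, relative to the interpretation, drawing on the references cited in Sect.~\ref{sect:12}.
\begin{itemize}
\item (D1) follows from $\Sigma_1$-completeness of $I\Sigma_1$, transported via the interpretation: an actual proof code $p$ yields $I\Sigma_1\vdash\Proof_{\ZF}(\num p,\ul{\varphi})$, hence $\ZF$ proves its translation.
\item (D2) is the formalized closure of proofs under modus ponens, which is provable already in $\EA$.
\item (D3) is formalized $\Sigma_1$-completeness. This is provable in $I\Sigma_1$, and therefore its translation is provable in $\ZF$.
\end{itemize}
Alternatively, to avoid (D3) entirely, I would invoke Jeroslow's trick from the Remark in Sect.~\ref{sect:12}.

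Third, I run the standard argument. By the diagonal lemma, which needs only representability of substitution and so is available in $I\Sigma_1$ and hence in $\ZF$, choose $G$ with $\ZF\vdash G\leftrightarrow\neg\Prov_{\ZF}(\ul G)$. Suppose $\ZF\vdash\Con(\ZF)$. The formalized first incompleteness theorem, derived from (D1)--(D3), gives $\ZF\vdash\Con(\ZF)\to G$. Hence $\ZF\vdash G$, and (D1) gives $\ZF\vdash\Prov_{\ZF}(\ul G)$, so $\ZF\vdash\neg G$. Then $\ZF$ is inconsistent, contradicting the hypothesis. Note that only consistency, not $\omega$-consistency, is needed here. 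One could equally bypass the details by citing the general 2.UVS for consistent recursively axiomatized theories interpreting $I\Sigma_1$, as was done in the proof of Thm.~\ref{thm:model-separation}.

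The main obstacle is the bookkeeping of the interpretation. One must make sure that the object-language sentence $\Con(\ZF)$ is exactly the translation of the arithmetical consistency sentence, so that the derivability conditions proved in $I\Sigma_1$ transfer to statements about $\ZF$-provability inside $\ZF$. I would handle this by noting that the translation commutes with logical connectives and that $\ZF\vdash\varphi^\omega$ for every theorem $\varphi$ of $I\Sigma_1$. Everything else is routine.
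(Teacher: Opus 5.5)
Your proposal is correct and follows the same route as the paper. The paper's proof is a one-line appeal to G\"odel's 2.UVS for $\ZF$, justified by the facts that $\ZF$ interprets enough arithmetic and that its standard proof predicate satisfies the derivability conditions. You unpack exactly that citation: the interpretation on $\omega$, the verification of (D1)--(D3) through the interpretation, the diagonal lemma, and the standard derivation. Your unpacking is sound.
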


\begin{proof}
This is G\"odel's 2.UVS applied to \(\ZF\), which interprets enough arithmetic and whose standard proof predicate satisfies the derivability conditions.
\end{proof}

\noindent We summarize the scheme-level part in our
\begin{corollary}\label{cor:zf-artemov-analogue}
For every standard \(n\), \(\ZF\vdash \Con_n(\ZF)\), and these instance
proofs may be selected primitively recursively. If \(\ZF\) is consistent,
then all these instances are true in the standard model \(\mathbb N\).
Nevertheless, assuming \(\ZF\) is consistent, \(\ZF\) does not prove the
corresponding uniform consistency sentence \(\Con(\ZF)\).
\end{corollary}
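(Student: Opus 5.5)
The statement is Cor.~\ref{cor:zf-artemov-analogue}. It has three clauses, and each is already contained in a result proved above, so the plan is simply to assemble them.

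\textbf{First and second clauses.} For every standard \(n\), \(\ZF\vdash\Con_n(\ZF)\), and these instance proofs can be selected primitively recursively. I will invoke Prop.~\ref{prop:zf-instances}, which was obtained from Prop.~\ref{prop:selector-con} with \(\mathcal L=\ZF\).

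The one point I will make explicit is the hypothesis of Prop.~\ref{prop:selector-con} that \(T\) extends \(\EA\). For \(\ZF\) this holds only via the standard interpretation of \(\EA\) in \(\ZF\), for instance on \(\omega\) using the von Neumann numerals. So "\(\ZF\vdash\varphi\)" for an arithmetical \(\varphi\) is to be read as provability of its translation. I will note two consequences of this reading:
\begin{itemize}
\item The arithmetization of \(\Proof_{\ZF}\) and the selector \(s_{\ZF}\) are unaffected, because the translation of formulas is itself primitive recursive.
\item Composing \(s_{\ZF}\) with the translation map still yields a primitive recursive function.
\end{itemize}
This gives primitively recursive selection of the instance proofs.

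\textbf{Truth in \(\mathbb N\).} For each standard \(n\), I will argue by contraposition. If \(\Con_n(\ZF)\) failed in \(\mathbb N\), then \(n\) would be an actual \(\ZF\)-derivation of \(0=1\), which contradicts the consistency of \(\ZF\). This is the last part of Prop.~\ref{prop:zf-instances}.

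\textbf{Third clause.} The claim that \(\ZF\nvdash\Con(\ZF)\), under the consistency assumption, is exactly Thm.~\ref{thm:zf-godel}.

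\textbf{Main obstacle.} The only delicate step is the bookkeeping around the interpretation: checking that the derivability conditions and the primitive recursive syntax transfer along the interpretation of \(\EA\) in \(\ZF\). The paper has taken this as standard, so I will simply cite that fact rather than reprove it.
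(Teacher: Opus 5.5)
Your proposal is correct and follows the paper's route. The paper gives no separate argument; it presents the corollary as a summary of Prop.~\ref{prop:zf-instances} (the instances $\Con_n(\ZF)$, their primitive recursive selection via Prop.~\ref{prop:selector-con}, and their truth in $\mathbb N$) together with Thm.~\ref{thm:zf-godel} for $\ZF\nvdash\Con(\ZF)$, and your reading of arithmetical statements in $\ZF$ through the standard interpretation of $\EA$ is exactly the convention used in the proof of Prop.~\ref{prop:zf-instances}.
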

\noindent Despite this negative result, we get

\begin{proposition}[Uniform reflexivity of ZF]
\label{prop:zf-uniform-reflexive}
Fix the standard arithmetization of the syntax of first-order set
theory and the corresponding proof predicate
\(\Proof_{\ZF}(p,x)\). Then \(\ZF\) is uniformly reflexive in the
sense analogous to Def.\ \ref{def:uniform-reflexivity}.
The same holds for any recursively axiomatized extension \(T\) of
\(\ZF\) in the language of set theory, once finite fragments of \(T\)
are coded by finite lists of \(T\)-axioms in the fixed coding.
\end{proposition}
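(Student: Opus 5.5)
The plan is to use the set-theoretic reflection principle (Lévy–Montague) in place of partial truth predicates, and to check that the classical argument for reflexivity of $\ZF$ is uniform and primitive recursive in a code of the fragment. Let $e$ code a finite list of $\ZF$-axioms (or $T$-axioms) $\varphi_0,\dots,\varphi_{k-1}$, and put $\varphi_e:=\bigwedge_{i<k}\varphi_i$. First, the reflection theorem schema gives, for each formula $\psi$, a $\ZF$-proof of
\[
\exists \alpha\,\bigl(\psi \leftrightarrow \psi^{V_\alpha}\bigr),
\]
and indeed one with the same $\alpha$ for all subformulas of $\psi$. The standard proof, by closing under Skolem-type ordinal functions for each existential subformula, is a uniform construction by recursion on the syntax of $\psi$. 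I would isolate it as a primitive recursive map $r(\ul{\psi})$ producing the $\ZF$-derivation. This is a meta-level function on codes, and its correctness is just the observation that the construction follows the subformula tree.

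Second, inside $\ZF$ (or $T$) I would argue as follows. The axioms $\varphi_e$ hold, since each is an axiom of the ambient theory. By the reflection instance $r(\ul{\varphi_e})$ there is an $\alpha$ with $\varphi_e^{V_\alpha}$. So $V_\alpha$ is a set model of $T_e$, that is, $(V_\alpha,\in)\models \varphi_i$ in the sense of the formalized satisfaction relation for set structures. This last step needs, for each standard $\varphi_i$, the $\ZF$-provable equivalence $\varphi^{V_\alpha}\leftrightarrow (V_\alpha,\in)\models\ul{\varphi}$. That equivalence is again given by a primitive recursive family of proofs, obtained by induction on the formula.

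Third, $\ZF$ proves the soundness theorem for set models once and for all: if a structure satisfies every member of a set of axioms, then every formula derivable from them holds in the structure, and $0=1$, read through the fixed interpretation of arithmetic, fails. Hence $\ZF\vdash \forall p\,\neg\Proof_{T_e}(p,\ul{0=1})$. The one subtlety here is that the formalized $\Proof_{T_e}$ lives in the arithmetic interpreted in $\omega$. It must be matched with the set-theoretic derivability relation, which is a single fixed $\ZF$-theorem independent of $e$. Concatenating the fixed soundness proof, the reflection instance $r(\ul{\varphi_e})$, the satisfaction equivalences, and the trivial proofs of the $\varphi_i$ from the axiom list yields $u_{\ZF}(e)$, and every ingredient is primitive recursive in $e$.

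For extensions $T\supseteq\ZF$ the same construction works verbatim. Reflection is proved in $\ZF\subseteq T$, and the fragment's axioms are $T$-axioms. The main obstacle is the first step: verifying that the reflection-principle proofs, together with the Tarski-type equivalences between relativization and formalized satisfaction, are produced by a primitive recursive function of the formula code, rather than merely existing for each formula. I would handle this by bounding the size of the generated derivation by an elementary function of the length of $\varphi_e$. The construction proceeds by a single pass over subformulas, so such a bound should follow.
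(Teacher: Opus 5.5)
Your proof is correct, and it takes a genuinely different route from the paper. The paper transposes its $\PA$ sketch. Inside $\ZF$ it uses partial truth (satisfaction) predicates on the universe, of complexity large enough for the formulas occurring in $d$ and in $F$. It then checks via finitely many Tarski conditions that the axioms of $F$ and the logical axioms are satisfied and that the rules preserve satisfaction. You instead use L\'evy--Montague reflection to obtain a set model $(V_\alpha,\in)\models T_e$, and then apply the single, $e$-independent $\ZF$-theorem of soundness for set structures.

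Your route buys two things:
\begin{itemize}
\item Satisfaction for set structures is one fixed formula covering formulas of every complexity. So the induction over an arbitrary derivation $p$ quantified inside $\Con(T_e)\equiv\forall p\,\neg\Proof_{T_e}(p,\ul{0=1})$ is unproblematic.
\item The dependence on $e$ is confined to the reflection instance and the finitely many equivalences $\varphi_i^{V_\alpha}\leftrightarrow(V_\alpha,\in)\models\ul{\varphi_i}$. Primitive recursiveness in $e$ is then transparent.
\end{itemize}
The paper's sketch, by contrast, selects the partial satisfaction predicate from $d$, even though $d$ is a bound variable of $\Con(T_e)$. Making that rigorous needs an additional device, such as a formalized cut-elimination bounding formula complexity in terms of $F$ alone, or exactly your passage to a set model. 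What the paper's route buys is one scheme covering both $\PA$ and $\ZF$, close to Artemov's partial-truth argument. Yours is specific to set theory, where reflection turns the class-sized universe into a set whose full satisfaction relation is definable.

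One small repair is needed. Use reflection in its standard form $\forall\beta\,\exists\alpha>\beta$ and take $\alpha>\omega$. A bare $\exists\alpha\,(\psi\leftrightarrow\psi^{V_\alpha})$ can be witnessed by $\alpha=0$ when all $\varphi_i$ are universal. Then $V_\alpha$ is not a nonempty structure to which soundness applies, and, depending on how $0=1$ is rendered in set theory, it need not fail there.
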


\begin{proof}[Sketch of a proof:]
The proof is the set-theoretic analogue of
Prop.~\ref{prop:pa-uniform-reflexive}. For a finite fragment
\(F\subseteq\ZF\), a putative \(F\)-derivation \(d\) involves only
finitely many formulas. \(\ZF\) formalizes the relevant finite
syntactic analysis and supplies partial truth, or satisfaction,
predicates of sufficiently high complexity for the finite collection of
formulas occurring in \(d\) and in \(F\). The corresponding finite
Tarski conditions allow \(\ZF\) to verify that the axioms of \(F\) are
satisfied, that logical axioms are satisfied, and that the inference
rules preserve satisfaction. Since \(0=1\) is not satisfied, \(d\)
cannot be an \(F\)-derivation of \(0=1\). The construction of the
partial satisfaction predicate and of the resulting proof is primitive
recursive in a code of \(F\).
\end{proof}

\begin{corollary}[Selector-proof for ZF]\label{cor:zf-selector}
With respect to a fixed standard arithmetization of the syntax of first-order set theory, \(\ZF\) admits a selector-proof of consistency in the sense analogous to Def.~\ref{def:selector-proof-consistency}. More generally, the same holds for every recursively axiomatized extension \(T\) of \(\ZF\) in the language of set theory, with finite
fragments coded in the fixed way.
\end{corollary}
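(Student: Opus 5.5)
The plan is to derive Cor.~\ref{cor:zf-selector} exactly as Cor.~\ref{cor:pa-extensions-} was derived: combine the uniform reflexivity of Prop.~\ref{prop:zf-uniform-reflexive} with the general construction of Thm.~\ref{thm:-selector}. The only work is to check that the hypotheses of Thm.~\ref{thm:-selector} make sense for a theory \(T\supseteq\ZF\) in the language of set theory. That theorem was stated for theories "extending \(\EA\)", and this has to be read via interpretation.

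First, I fix the standard interpretation of \(\EA\) (indeed of \(\PA\)) in \(\ZF\) via \(\omega\), and read \(\Proof_T\), \(\Proof_{T_e}\), numerals \(\num d\), and the statements \(\Con(T_e)\) as the translations of the corresponding arithmetical formulas. Lemma~\ref{lem:proof-predicate} still applies: \(\Proof_T\) is primitive recursive, since the axiom set of \(T\) is recursive. Representability of primitive recursive relations transfers through the interpretation. In particular, for each true or false closed \(\Delta_0\) (or primitive recursive) instance, \(\ZF\), and hence \(T\), proves its translation, and this is done by a proof primitive recursive in the instance. This is the "analogous sense" of Def.~\ref{def:selector-proof-consistency} referred to in the statement.

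Second, I rerun the proof of Thm.~\ref{thm:-selector} verbatim:
\begin{enumerate}[label=(\roman*)]
\item The map \(d\mapsto T_d\) (the list of non-logical \(T\)-axioms occurring in \(d\), empty if \(d\) is ill-formed) is primitive recursive.
\item By Prop.~\ref{prop:zf-uniform-reflexive}, applied to \(T\) with finite fragments coded as finite lists of \(T\)-axioms, \(u_T(T_d)\) is a \(T\)-proof of \(\Con(T_d)\).
\item The implication \(\Proof_T(\num d,\ul{0=1})\to\Proof_{T_d}(\num d,\ul{0=1})\) is a true primitive recursive statement about the fixed numeral \(\num d\). Hence it has a \(T\)-proof obtained uniformly in \(d\), by formalized syntactic verification inside the interpreted arithmetic.
\item Instantiating the universal quantifier of \(\Con(T_d)\) at \(\num d\) and applying modus ponens gives \(\neg\Proof_T(\num d,\ul{0=1})\).
\end{enumerate}
Concatenating these proof codes yields the primitive recursive selector \(h_T\), and by construction it proceeds in the prescribed way through \(T_d\).

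The main obstacle is step (iii), together with the uniformity in (ii). One must be careful that the coding of fragments used by Prop.~\ref{prop:zf-uniform-reflexive} coincides with the output format of \(d\mapsto T_d\). One must also check that the formula \(\Con(T_d)\) that \(u_T\) proves is literally the formula instantiated in (iv), uniformly in \(d\) rather than merely up to provable equivalence. I would handle this by fixing one canonical arithmetized formula \(\Proof_{T_e}(p,x)\) with \(e\) as a free parameter. Then \(\Con(T_e)\) is obtained by substituting the numeral for \(e\), and both \(u_T\) and the proof in (iii) refer to this single substituted formula. For the ZF extension case, I would additionally note that the interpreted arithmetic and the partial satisfaction predicates of Prop.~\ref{prop:zf-uniform-reflexive} are already available in \(\ZF\subseteq T\). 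Therefore nothing beyond the fixed coding depends on \(T\).
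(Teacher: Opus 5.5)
Your proposal is correct and follows the paper's proof, which simply combines Prop.~\ref{prop:zf-uniform-reflexive} with Thm.~\ref{thm:-selector}. Your additions are reading $\EA$ through the interpretation via $\omega$, and requiring that the formula $\Con(T_e)$ produced by $u_T$ be literally the one you instantiate at $\num d$; this is the bookkeeping the paper leaves implicit in ``in the sense analogous to'', and you handle it correctly.
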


\begin{proof}
Combine Prop.~\ref{prop:zf-uniform-reflexive} with
Thm.~\ref{thm:-selector}.
\end{proof}

\section{Concluding remarks}\label{sec:discussion}

\subsection{Postlegomena} 
Let us finally isolate several conceptual observations, which are suggested by the
preceding analysis.  The usual G\"odelian consistency statement
\[
        \Con(T)\equiv
        \forall p\,\neg\Proof_T(p,\ulcorner 0=1\urcorner)
\]
is one very specific way of formalizing the assertion that \(T\) is
consistent: It compresses all finite verifications into a single
universal arithmetical sentence.  G\"odel's 2.UVS shows that, for a
sufficiently strong consistent theory \(T\), this particular
internalization is not available in \(T\) itself.\\\\
The selector constructions considered in this paper certainly do not challenge
this conclusion.  Rather, they show that it would be misleading to
identify uniformity of consistency with this single universal
formalization.  A selector-proof gives a different kind of uniformity:
not a proof in \(T\) of the sentence \(\Con(T)\), but a uniform
procedure producing, for each given putative proof code \(d\), a
\(T\)-proof of
\[
        \neg\Proof_T(d,\ulcorner 0=1\urcorner).
\]
In its scheme-level form this amounts to a primitive recursive
procedure producing proofs of all instances of the consistency schema
\(\Con^S(T)\).  In its stronger finite-fragment form, the selector
also records the local reason why the alleged derivation fails: It
extracts the finite axiom fragment \(T_d\subseteq T\) actually used in
the derivation and appeals to a \(T\)-proof of \(\Con(T_d)\).\\\\
Thus, the relevant distinction is not simply between uniform and
non-uniform treatments of consistency.  It is rather a distinction
between different ways in which uniformity may be realized.\\\\  
One may
distinguish the following levels:
\begin{enumerate}[label=(Unif\arabic*)]
   \item \emph{Scheme-level selector uniformity}: There is a single
  primitive recursive function producing \(T\)-proofs of all instances
  \(\Con_n(T)\).

  \item \emph{Finite-fragment selector uniformity}: Given a natural
  number \(d\), regarded as a putative derivation code, one uniformly
  extracts the finite fragment \(T_d\) used by \(d\) and obtains, via a
  \(T\)-proof of \(\Con(T_d)\), a \(T\)-proof that \(d\) is not a proof
  of contradiction.

  \item \emph{Full internal uniformity}: \(T\) proves the single
  universal sentence \(\Con(T)\), or an equivalent uniform reflection
  principle.
\end{enumerate}

\noindent G\"odel's 2.UVS excludes the third level, under the usual hypotheses.
It does not exclude the preceding proof-producing forms of uniformity.
The selector therefore marks a precise proof-theoretic gap: local
consistency verifications may be organized by a single effective
procedure without thereby being absorbed into the global sentence
\(\Con(T)\).\\\\
From this perspective, the philosophical conclusion is that there is more than one mathematically meaningful notion
of uniformity of verification of consistency of a given suitably complex theory $T$.  If uniformity is understood as full internal
compression into the single sentence \(\Con(T)\), G\"odel's theorem
gives a negative answer.  If it is understood as uniform
proof-production for the local consistency claims associated with
putative derivations, selector-proofs give a positive answer.  The
boundary revealed by G\"odel's 2.UVS is therefore not the boundary
between uniformity and non-uniformity as such, but the boundary between
proof-producing uniformity and its complete internalization as a
universal consistency assertion.

\subsection{An orthogonal approach -- different provability predicates}
Another way of resisting the standard interpretation of G\"odel's 2.UVS
is to alter the provability predicate itself. This approach is
conceptually distinct from the schema-based strategy discussed above.
Rather than weakening the form of the consistency assertion, one
replaces the standard provability predicate $\Prov_T(x)$ by a predicate
$\Prov_T^*(x)$ which may still be extensionally related to theoremhood,
but which fails to satisfy one or more of the structural conditions
required for the usual derivation of G\"odel's theorem, see, e.g.,
\cite{Detlefsen1979,Detlefsen2001,Visser2001}.\\\\
Recall that under our standing assumptions, cf.\ Sect.\ \ref{sect:1},
the standard provability predicate $\Prov_T(x)$ satisfies the
Hilbert--Bernays--L\"ob derivability conditions, and that these are
essential for the classical proof of G\"odel's 2.UVS. A predicate
$\Prov_T^*(x)$ may therefore evade the theorem, if it violates a
relevant portion of these conditions. In this sense, one may obtain
``consistency statements'' which are provable in $T$, but only at the
cost of departing from the standard structural notion of provability
\`a la Hilbert--Bernays--L\"ob.\\\\
A humble, non-standard example is given by
\[
\Prov_T^*(x) \;\equiv\; \Prov_T(x) \wedge x \neq \ul{0=1}.
\]
Then, $\Prov_T^*(\ul{0=1})$ is trivially refutable already in very weak arithmetic
and hence in $T$. However, this predicate fails to satisfy the
Hilbert--Bernays--L\"ob derivability conditions, as announced.\\\\
More sophisticated constructions, such as Rosser-style provability
predicates, are considerably subtler, see, for instance,
\cite{Smorynski1985,Visser2001}. Their philosophical significance has
been extensively discussed in the literature, most notably by
Detlefsen \cite{Detlefsen1979,Detlefsen1986,Detlefsen2001}. The central
question is whether the Hilbert--Bernays--L\"ob conditions should be
regarded as essential to the very notion of formal provability, or
merely as one convenient formalization among others.\\\\
From the perspective of the present paper, the key point is that this
approach operates at a different level from the schema-based
distinction analysed above. The latter preserves the standard notion
of provability and weakens the formal incarnation of the consistency assertion,
whereas the present strategy modifies the notion of provability
itself. The two approaches are therefore orthogonal: One weakens the
form of the consistency assertion, the other the underlying notion of
provability.

\subsection{A reflection on Reflection}\label{sec:hierarchies}
Let us finally locate the schema/uniformity distinction in relation to reflection hierarchies. 
Reflection principles and their iterated forms play a central role in
the proof-theoretic analysis of arithmetic. They provide a systematic
way of measuring the strength of theories in terms of their internal
soundness properties. The resulting hierarchies of theories, obtained
via iterated reflection or consistency extensions, have been studied
extensively, in particular in the work of Feferman \cite{Fef62} and Beklemishev
\cite{Beklemishev1995,Beklemishev2005,Beklemishev2010}. A closely
related perspective is given by provability algebras and polymodal
provability logics, where such hierarchies are analysed in a modal
framework, allowing for a fine calibration of reflection principles
and their iterations (see also \cite{Visser2001}). In particular, in
Beklemishev's $\GLP$-framework, one considers a family of modal
operators, denoted by $[n]$ (and their duals $\langle n\rangle$), each intended to
formalize a provability predicate of increasing strength. The
corresponding iterated consistency statements, usually denoted $\langle n\rangle \top$,
serve as canonical measures of proof-theoretic strength. For general
background on arithmetized provability and reflection we refer to
\cite{HP}.\\\\
Slightly more precisely, let $T$ be as in Sect.~\ref{sect:1}, and assume in addition that
$T \supseteq I\Sigma_1$. Iterated reflection progressions are
commonly defined by
\[
  T_0 := T,
  \qquad
  T_{\alpha+1} := T_\alpha + \RFN_\Gamma(T_\alpha),
\]
where $\RFN_\Gamma(T)$ denotes the uniform reflection schema over
$\Gamma$, with limit stages obtained by unions (or, more generally,
along ordinal notation systems, cf.~\cite{Fef62, Beklemishev2005,Beklemishev2010}).
In Beklemishev's framework, such progressions admit a natural
interpretation in terms of the modal operators $[n]$ of $\GLP$, where
successive reflection principles correspond to increasing modal
strength, and consistency extensions are represented by formulas of
the form $\langle n\rangle \top$.\\\\ 
Selector-proofs belong to a closely related, layer. They provide a uniform proof-producing mechanism for local
consistency claims, and hence offer a certain constructive form of reflection. In the finite-fragment version, given a natural
number \(d\), regarded as a putative derivation code, the selector
extracts the finite fragment \(T_d\subseteq T\) actually used by \(d\)
and appeals to a \(T\)-proof of \(\Con(T_d)\). What is uniform here is
therefore not a global truth assertion, but the production of the
corresponding local proof object.\\\\  A reflection principle allows one to pass from provability
to truth, schematically or uniformly. A selector-proof keeps the proof object explicit and produces, for
each putative derivation, a proof excluding that particular derivation
as a proof of contradiction. Thus, the selector construction yields
uniform proof production without collapsing into a uniform soundness
principle.\\\\ 
This also clarifies the relation to Mostowski's Reflexivity Theorem, which supplies consistency information for finite
fragments of a theory. The selector construction makes this information
effective and proof-producing in the relevant finite data: Given a
putative derivation, it keeps track -- at least theoretically -- of the finite fragment actually
used and produces the corresponding local proof object. What it does
not provide is a general passage from provability to truth. In
particular, it does not yield a uniform reflection principle of the
form
\[
        \Prov_T(\ulcorner A\urcorner)\rightarrow A,
\]
nor does it yield the universal consistency sentence \(\Con(T)\).\\\\
Thus, selector-proofs occupy a position just below uniform reflection
principles studied in reflection progressions and
provability-logical hierarchies. This is also why they coexist with
G\"odel's 2.UVS: They give uniform control of proof objects, but not a full
internalization of consistency as a single global assertion. It is tempting to think that {\it hence} such an internalization was a red herring to begin with.



\subsection{The role of the selector $h_T$ in a (philosophical-technical) nutshell} Form the perspective of a philosopher, who is not overly interested in pure logic, it is instructive --  although from the perspective of a logician not entirely accurate -- to allow oneself to view the selector $h_T$ in a selector-proof of consistency as a primitive recursive function that maps any given model $\mathcal N$ of PA to itself, $h_T: \mathcal N\rightarrow \mathcal N$, with the property that $h_T(n)$ is a standard natural number, if $n$ is. Necessarily, also non-standard natural numbers will appear in the image of $h_T$, for non-standard models $\mathcal N$ of PA. However -- and that might be take as the underlying philosophical-technincal clue of the selector, though, again, we warn the reader that this is logically not perfectly accurate -- $h_T$ will not ``match'' a non-standard natural number, which encodes a non-standard proof of a contradiction, i.e., $h_T$ avoids G\"odel's ``monsters''.\\\\ 
From this generously vague perspective, introducing the selector $h_T$ repairs the defect that one cannot define the standard numerals, i.e., the standard model $\N$, within a non-standard model $\mathcal N$ of PA by means of first-order predicate logic: One may arrange that $h_T$ maps standard natural numbers on standard natural numbers and non-standard natural numbers on non-standard ones, while bypassing all those non-standard natural numbers, which possible encode a proof of a contradiction. I.e., instead of ``$\N$ inside $\mathcal N$'', which one might be tempted to define {\it a priori} in order to stay within the realm of honest-to-goodness proof-codes and to circumvent the non-standard natural numbers, which ``cause'' G\"odel's 2.UVS, one defines a suitably bigger realm via $h_T$, but which is small enough to exclude all those problematic non-standard numerals.


\begin{thebibliography}{99}

\bibitem[Art01]{Artemov2001}
S.~N. Artemov,
Explicit provability and constructive semantics,
\emph{Bulletin of Symbolic Logic} \textbf{7} (2001), no.~1, 1--36.

\bibitem[Art19]{Artemov2019}
S.~N. Artemov,
The provability of consistency,
arXiv:1902.07404, 2019.

\bibitem[Art25a]{Artemov2025JLC}
S.~N. Artemov,
Serial properties, selector-proofs and the provability of consistency,
\emph{Journal of Logic and Computation} \textbf{35} (2025), no.~3, exae034.

\bibitem[Art25b]{Artemov2025}
S.~N. Artemov,
Consistency formula is strictly stronger in PA than PA-consistency,
arXiv:2508.20346, 2025.

\bibitem[Aue78]{Auerbach1978}
D.~D. Auerbach,
\emph{Expressing Consistency: G\"odel's Second Incompleteness Theorem and Intensionality in Metamathematics},
Ph.D. thesis, Massachusetts Institute of Technology, 1978.

\bibitem[Aue89]{Auerbach1989}
D.~D. Auerbach,
Review of Michael Detlefsen, \emph{Hilbert's Program: An Essay on Mathematical Instrumentalism},
\emph{Journal of Symbolic Logic} \textbf{54} (1989), no.~2, 620--622.

\bibitem[Bek95]{Beklemishev1995}
L.~D. Beklemishev,
Iterated local reflection versus iterated consistency,
\emph{Annals of Pure and Applied Logic} \textbf{75} (1995), 25--48.

\bibitem[Bek05]{Beklemishev2005}
L.~D. Beklemishev,
Reflection principles and provability algebras in formal arithmetic,
\emph{Russian Mathematical Surveys} \textbf{60} (2005), no.~2, 197--268.

\bibitem[Bek10]{Beklemishev2010}
L.~D. Beklemishev,
Calibrating provability logic: from modal logic to reflection calculus,
\emph{Russian Mathematical Surveys} \textbf{65} (2010), no.~5, 857--899.

\bibitem[Det79]{Detlefsen1979}
M.~Detlefsen,
On interpreting G\"odel's second theorem,
\emph{Journal of Philosophical Logic} \textbf{8} (1979), 297--313.

\bibitem[Det86]{Detlefsen1986}
M.~Detlefsen,
\emph{Hilbert's Program: An Essay on Mathematical Instrumentalism},
Synthese Library, vol.~182, D. Reidel, Dordrecht, 1986.

\bibitem[Det01]{Detlefsen2001}
M.~Detlefsen,
What does G\"odel's second theorem say?,
\emph{Philosophia Mathematica} \textbf{9} (2001), no.~1, 37--71.

\bibitem[Fef60]{Feferman1960}
S.~Feferman,
Arithmetization of metamathematics in a general setting,
\emph{Fundamenta Mathematicae} \textbf{49} (1960), 35--92.

\bibitem[Fef62]{Fef62} 
S. Feferman, Transfinite recursive progressions of axiomatic theories, \emph{Journal of Symbolic Logic} \textbf{27} (1962), no. 3, 259--316. 

\bibitem[G\"od31]{Godel1931}
K.~G\"odel,
\"Uber formal unentscheidbare S\"atze der Principia Mathematica und verwandter Systeme I,
\emph{Monatshefte f\"ur Mathematik und Physik} \textbf{38} (1931), 173--198.

\bibitem[H\'aj-Pud93]{HP}
P.~H\'ajek and P.~Pudl\'ak,
\emph{Metamathematics of First-Order Arithmetic},
Perspectives in Mathematical Logic, Springer, Berlin, 1993.

\bibitem[Hil-Ber39]{HilbertBernays1939}
D.~Hilbert and P.~Bernays,
\emph{Grundlagen der Mathematik II},
Springer, Berlin, 1939.

\bibitem[Jer71]{Jeroslow1971}
R.~G. Jeroslow,
Consistency statements in formal arithmetic,
\emph{Fundamenta Mathematicae} \textbf{72} (1971), 17--40.

\bibitem[Lan79]{LanglandsCorvallis}
R.~P. Langlands,
Automorphic representations, Shimura varieties, and motives. Ein M\"archen,
in: \emph{Automorphic Forms, Representations and L-Functions},
Proc. Sympos. Pure Math., vol.~33, Part~2, Amer. Math. Soc., Providence, RI, 1979,
pp.~205--246.

\bibitem[L\"ob55]{Lob1955}
M.~H. L\"ob,
Solution of a problem of Leon Henkin,
\emph{Journal of Symbolic Logic} \textbf{20} (1955), no.~2, 115--118.

\bibitem[Mos52]{Mos52} 
A. Mostowski, \emph{Sentences Undecidable in Formalized Arithmetic: An Exposition of the Theory of Kurt G\"odel}, North-Holland, Amsterdam, 1952. 

\bibitem[Pud86]{Pudlak1986}
P.~Pudl\'ak,
On the length of proofs of finitistic consistency statements in first
order theories,
in J.~B. Paris, A.~J. Wilkie and G.~M. Wilmers (eds.),
\emph{Logic Colloquium '84},
North-Holland, Amsterdam, 1986, pp.~165--196.

\bibitem[Res74]{Resnik1974}
M.~D. Resnik,
On the philosophical significance of consistency proofs,
\emph{Journal of Philosophical Logic} \textbf{3} (1974), no.~1/2, 133--147.

\bibitem[Smo85]{Smorynski1985}
C.~Smory\'nski,
\emph{Self-Reference and Modal Logic},
Universitext, Springer-Verlag, New York, 1985.

\bibitem[Ste91]{Steiner1991}
M.~Steiner,
Review of Michael Detlefsen, \emph{Hilbert's Program: An Essay on Mathematical Instrumentalism},
\emph{Journal of Philosophy} \textbf{88} (1991), no.~6, 331--336.

\bibitem[Ver-Vis94]{Ver-Vis94} 
R. Verbrugge and A. Visser, A small reflection principle for bounded arithmetic, \emph{Journal of Symbolic Logic} \textbf{59} (1994), no. 3, 785--812.

\bibitem[Vis01]{Visser2001}
A.~Visser,
Provability logic,
in D.~M. Gabbay and F.~Guenthner (eds.), \emph{Handbook of Philosophical Logic}, 2nd ed., vol.~4, Kluwer, Dordrecht, 2001, 1--107.

\end{thebibliography}
\end{document}